\documentclass[12pt]{iopart}

\usepackage{amsthm}
\usepackage{tikz-cd}
\usetikzlibrary{arrows}
\usepackage{enumitem}
\usepackage{graphicx}

\newcommand{\mc}{\mathcal}
\newcommand{\ts}{\textsc}
\def\Perm{\mbox{Perm}}

\newtheorem{thm}{Theorem}[section]
\newtheorem{lemma}[thm]{Lemma}
\newtheorem{cor}[thm]{Corollary}
\newtheorem{prop}[thm]{Proposition}

\newtheorem{defn}[thm]{Definition}
\newtheorem{remark}[thm]{Remark}
\newtheorem{example}[thm]{Example}

\begin{document}

\title{Quantum monadic algebras}

\author{J.~Harding \footnote{The author gratefully acknowledges support of US Army grant W911NF-21-1-0247. }}

\address{New Mexico State University, Las Cruces, NM 88003, USA}
\ead{jharding@nmsu.edu}
\vspace{10pt}
\begin{indented}
\item[]February 2022
\end{indented}


\date{}

\begin{abstract}
We introduce quantum monadic and quantum cylindric algebras. These are adaptations to the quantum setting of the monadic algebras of Halmos, and cylindric algebras of Henkin, Monk and Tarski, that are used in algebraic treatments of classical and intuitionistic predicate logic. Primary examples in the quantum setting come from von Neumann algebras and subfactors. Here we develop the basic properties of these quantum monadic and cylindric algebras and relate them to quantum predicate logic. 
\end{abstract}

\vspace{2pc}
\noindent{\it Keywords}: cylindric algebra, monadic algebra, quantum logic, subfactor, von Neumann algebra

%
%

\section{Introduction}

Beginning with the work of Boole  
in the mid 1800's, algebraic techniques have played a significant role in logic. The Boolean algebras that grew from Boole's work serve as algebraic models of classical propositional calculus. However, the significance of Boolean algebras has grown past this original motivation. Boolean algebras are a recognized feature in such diverse areas as digital electronics, set theory, probability and measure theory, commutative algebra, and operator theory. In addition to the role played by Boolean algebras within each subject is the important role they play in moving ideas between these subjects. 

The algebraic techniques used to model classical propositional logic were extended to algebraic treatments of intuitionistic propositional logic by Heyting and to modal propositional logic by MacColl, Lewis, and others. 
In the first case, the relevant algebras are Heyting algebras, and in the second Boolean algebras with additional operations. 

The algebraic treatment of classical propositional logic was extended in another direction to give an algebraic view of classical first order logic, also known as predicate calculus. Roughly, predicate calculus extends propositional logic by allowing quantification over individuals using $\forall x$ and $\exists x$. 

There were two closely related streams in this direction, one originated by Halmos \cite{Halmos62}, and the other by Tarski \cite{HMT85a,HMT85b}. Halmos first treated the 1-variable fragment of predicate calculus via what he termed a {\em monadic algebra}, that is, a Boolean algebra with an additional unary operation $\exists$. Halmos' extension to an arbitrary number of variables came under the name of a {\em polyadic algebra}, a Boolean algebra with a family of unary operations satisfying certain properties. Tarski's {\em cylindric algebras} were closely related to Halmos' polyadic algebras and again consisted of a Boolean algebra with a family of additional operations. 

It their 1937 paper \cite{BvN37}, Birkhoff and von Neumann proposed that the projection lattice $P(\mc{H})$ of a Hilbert space $\mc{H}$ be used as an algebraic semantics for a type of propositional logic arising in quantum mechanics just as Boolean algebras are used as an algebraic semantics for classical propositional logic. They termed this the ``logic'' of quantum mechanics. This process of replacing a Boolean algebra with a projection lattice, or more generally an orthomodular lattice \cite{Kalmbach83}, can be viewed as the first of many ``quantizations'' of classical mathematical notions. Others include such notions as quantum probability, non-commutative geometry 
and quantum groups. 

Formal propositional logical systems can be built from projection lattices $P(\mc{H})$, or general orthomodular lattices, rather than Boolean algebras. This gives an obvious meaning to the term ``quantum logic''. There is a more subtle meaning of the term ``quantum logic'', and likely the more prevalent one. This refers to viewing the algebraic structure $P(\mc{H})$ as a vehicle to to carry ideas between areas, much as one looks at Boolean algebras in this way. 

As cornerstones of this general view of quantum logic, 
the spectral theorem, Gleason's theorem, Wigner's theorem, and Stone's theorem describe self-adjoint operators, states, unitary and anti-unitary operators, representations, and dynamics of a quantum system in terms of certain natural mappings involving the projection lattice $P(\mc{H})$ (see \cite{KR97a,Dvurecenskij93,Uhlhorn63,Harding17} for further details). 
In addition, the structure of $P(\mc{H})$ has strong ties to projective geometry that form the basis of the ``geometric'' approach to quantum mechanics of Varadarajan \cite{Varadarajan68}. 

In this note, we introduce quantum analogs of the monadic and cylindric algebras that are used to study classical predicate calculus. These {\em quantum monadic algebras} and {\em quantum cylindric algebras} are projection lattices $P(\mc{H})$, or general orthomodular lattices, equipped with additional operations satisfying conditions similar to their classical counterparts. We develop the basic theory of such algebras, including a treatment of their dual spaces in the spirit of Kripke frames \cite{brv01}, and detail the use of these algebras for an algebraic treatment of quantum predicate calculus. This treatment of quantum predicate calculus involves a detailed view of the projection lattice $P(\mc{H}^{\otimes n})$ of a tensor power of a Hilbert space $\mc{H}$. 

Further examples of quantum monadic algebras arise from von Neumann algebras. Any concrete representation of a von Neumann algebra $\mc{M}$ in the bounded operators $B(\mc{H})$ of a Hilbert space $\mc{H}$ provides a quantum monadic algebra structure on the projection lattice $P(\mc{H})$, and any subfactor $\mc{M}\leq\mc{N}$ provides a quantum monadic algebra structure on the projection lattice $P(\mc{N})$. Additionally, it appears there are ties between quantum cylindric algebras and the commuting squares \cite{Popa83,WW95} of projections that provide a quantum analog of independent $\sigma$-fields. It is our hope that quantum monadic and cylindric algebras can eventually serve as a vehicle to import logical perspective into the study of these operator algebras. 

This paper is arranged in the following way. Sections 2--4 provide background on classical monadic and cylindric algebras, von Neumann algebras, and quantum logic. The fifth section introduces quantum monadic and quantum cylindric algebras and develops their basic properties. The sixth section gives a treatment of quantum predicate calculus in terms of quantum cylindric algebras. The seventh section deals with an analog of Kripke frame representations of these algebras. The final section concludes with some comments and suggestions for further directions. 

\section{Classical monadic and cylindric algebras} 

For a detailed account of monadic and polyadic algebras see \cite{Halmos62} and for an account of cylindric algebras see \cite{HMT85a,HMT85b}. Throughout we use $\wedge, \vee,\perp,0,1$ for the meet, join, complementation, and bounds of a Boolean algebra (abbrev. \ts{ba}). 

\begin{defn}\label{defn:monadic}
A {\em quantifier} $\exists$ on a \ts{ba} $B$ is a unary operation that satisfies
\begin{itemize}[leftmargin=.4in]
\item[{\em \small (Q$_1$)}] $\exists 0 = 0$,
\item[{\em \small (Q$_2$)}] $p\leq \exists p$,
\item[{\em \small (Q$_3$)}] $\exists(p\vee q) = \exists p \vee \exists q$,
\item[{\em \small (Q$_4$)}] $\exists\exists p = \exists p$,
\item[{\em \small (Q$_5$)}] $\exists (\exists p)^\perp = (\exists p)^\perp$.
\end{itemize}
A {\em monadic algebra} $(B,\exists)$ is a \ts{ba} $B$ with a quantifier $\exists$.
\end{defn}

\begin{lemma} {\em \cite{Halmos62}} \label{lemma:a}
The conditions {\em \small (Q$_1$) -- (Q$_5$)} are equivalent to {\em\small (Q$_1$), (Q$_2$), (Q$_6$)} where 
\begin{itemize}[leftmargin=.4in]
\item[{\em \small (Q$_6$)}] $\exists(p\wedge\exists q) = \exists p \wedge \exists q$. 
\end{itemize}
\end{lemma}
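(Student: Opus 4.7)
The plan is to prove the two implications (Q$_1$)--(Q$_5$)~$\Rightarrow$~(Q$_6$) and (Q$_1$),(Q$_2$),(Q$_6$)~$\Rightarrow$~(Q$_3$),(Q$_4$),(Q$_5$) separately. A key fact that will be used throughout is monotonicity of $\exists$, which follows from (Q$_3$) in the first direction and from (Q$_6$) together with (Q$_2$) in the second (since $p\leq q$ gives $p=p\wedge\exists q$, hence $\exists p=\exists p\wedge\exists q\leq\exists q$).

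For (Q$_1$)--(Q$_5$)~$\Rightarrow$~(Q$_6$) I would first establish the easy inequality $\exists(p\wedge\exists q)\leq \exists p\wedge\exists q$: the bound by $\exists p$ is immediate from monotonicity, and the bound by $\exists q$ uses monotonicity followed by (Q$_4$), via $p\wedge\exists q\leq \exists q$ so $\exists(p\wedge\exists q)\leq \exists\exists q=\exists q$. The reverse inequality is the substantive step. I would split $p$ using the Boolean identity $p=(p\wedge\exists q)\vee(p\wedge(\exists q)^\perp)$, apply (Q$_3$) to get $\exists p=\exists(p\wedge\exists q)\vee\exists(p\wedge(\exists q)^\perp)$, then bound the second disjunct using monotonicity and (Q$_5$) by $(\exists q)^\perp$. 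Meeting both sides with $\exists q$ kills that disjunct and leaves $\exists p\wedge\exists q\leq \exists(p\wedge\exists q)$, as required.

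For the converse, (Q$_4$) falls out of (Q$_6$) by setting $p=\exists q$, giving $\exists\exists q=\exists\exists q\wedge\exists q\leq \exists q$, while (Q$_2$) applied to $\exists q$ gives the opposite inequality. For (Q$_5$) I would put $p=(\exists q)^\perp$ in (Q$_6$) to get $\exists 0=\exists(\exists q)^\perp\wedge\exists q$; with (Q$_1$) this shows $\exists(\exists q)^\perp\leq(\exists q)^\perp$, and (Q$_2$) again supplies the other direction.

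The main obstacle will be deriving the additivity axiom (Q$_3$). The one-sided inequality $\exists p\vee\exists q\leq\exists(p\vee q)$ is just monotonicity, so the task is to show $\exists(p\vee q)\leq\exists p\vee\exists q$. My plan is to prove that $\exists p\vee\exists q$ is a fixed point of $\exists$, so that applying $\exists$ to the inequality $p\vee q\leq\exists p\vee\exists q$ yields the result. To see this, I would argue that its complement $(\exists p)^\perp\wedge(\exists q)^\perp$ is fixed: by (Q$_5$) (already derived) we may rewrite $(\exists q)^\perp$ as $\exists(\exists q)^\perp$, then (Q$_6$) directly computes $\exists\bigl((\exists p)^\perp\wedge\exists(\exists q)^\perp\bigr)=\exists(\exists p)^\perp\wedge\exists(\exists q)^\perp=(\exists p)^\perp\wedge(\exists q)^\perp$. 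Since the fixed points of $\exists$ coincide with the image of $\exists$ (by (Q$_2$) and (Q$_4$)) and the image is closed under $^\perp$ by (Q$_5$), fixedness of the meet transfers to fixedness of the join, completing the argument.
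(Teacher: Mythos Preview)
Your argument is correct in both directions. The paper itself does not supply a proof of this lemma; it simply records the result with a citation to Halmos, so there is nothing to compare your approach against beyond noting that what you have written is essentially the classical argument one finds in that source. One minor remark: in the first implication you silently use distributivity of the Boolean algebra both in the splitting $p=(p\wedge\exists q)\vee(p\wedge(\exists q)^\perp)$ and in the step ``meeting both sides with $\exists q$ kills that disjunct''; this is fine in the present Boolean setting but is exactly the step that fails in the orthomodular case, which is why the paper later exhibits a quantum monadic algebra violating (Q$_6$).
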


One can similarly give a definition of a universal quantifier $\forall$. This definition is dual to that given above, replacing $0$ with 1, $\leq$ with $\geq$, and join with meet. The relation between universal and existential quantifiers is the expected, if $\exists$ is an existential quantifier, then $\forall p = (\exists p^\perp)^\perp$ is a universal quantifier and conversely. 

\begin{example} {\em
Halmos' motivating example is that of a {\em functional monadic algebra}. Suppose individuals for the single variable we wish to quantify over range in a set $X$. Then a proposition depending on this variable has a truth value for each member of $X$, and we assume these truth values belong to a complete \ts{ba} $B$. So propositions give functions $f:X\to B$. The quantifier $\exists f$ applied to the {\em propositional function} $f$ is defined to take constant value $\bigvee\{f(x):x\in X\}$ and the universal quantifier $\forall f$ applied to $f$ takes constant value $\bigwedge\{f(x):x\in X\}$. A key step in Halmos' development of the theory of monadic algebras is to show that each monadic algebra is a subalgebra of a functional one. 
}
\end{example}

Halmo extends monadic algebras to allow quantification over more than one variable with his {\em polyadic algebras} \cite{Halmos62}. This notion is closely related to the cylindric algebras initiated by Tarski, and developed by Henkin, Monk and Tarski \cite{HMT85a,HMT85b}. Roughly, cylindric algebras amount to polyadic algebras with equality \cite{Galler57}. We prefer to present things from the more common perspective of cylindric algebras. Their presentation here is modified somewhat to take advantage of our current development of quantifiers. 

\begin{defn}
A {\em cylindric algebra} $(B,c_i, d_{i,j})$ consists of a \ts{ba} $B$ with a family $c_i$ of unary operations and $d_{i,j}$ of constants for $i,j\in I$ such that for each $i,j,k\in I$ 
\begin{itemize}[leftmargin=.4in]
\item[{\em \small (C$_1$)}] $c_i$ is a quantifier,
\item[{\em \small (C$_2$)}] $c_ic_jx=c_jc_ix$,
\item[{\em \small (C$_3$)}] $d_{i,j}=d_{j,i}$ and $d_{i,i}=1$,
\item[{\em \small (C$_4$)}] if $j\neq i,k$ then $d_{i,k}=c_j(d_{i,j}\wedge d_{j,k})$,
\item[{\em \small (C$_5$)}] if $i\neq j$ then $c_i(d_{i,j}\wedge x)\wedge c_i(d_{i,j}\wedge x^\perp) = 0$.
\end{itemize}
The operations $c_i$ are called cylindrifications, and the $d_{i,j}$ are diagonals. 
\end{defn}

Usually the indexing set $I$ is assumed to be an ordinal and called the {\em dimension} of the cylindric algebra. A {\em diagonal-free} cylindric algebra is simply a \ts{ba} with a family of pairwise commuting quantifiers.  

\begin{example}\label{ex:cylindric set algebra} {\em 
The motivating example of an $I$-dimensional cylindric algebra comes from considering the the \ts{ba} $B$ of all subsets of $X^I$ for some set $X$. Elements $f\in X^I$ are $I$-indexed families in $X$. For a subset $A\subseteq B$, the cylindrification $c_i A$ is the set of all such indexed families $g$ that agree with some $f\in A$ except possibly in the coordinate $i$. The diagonal $d_{i,j}=\{f\mid f(i)=f(j)\}$. 
}
\end{example}

\begin{example} {\em 
One obtains an $\omega$-dimensional cylindric algebra, where $\omega$ is the natural numbers, by considering a first-order structure based on a set $X$. For each formula $\phi$ consider the subset of $X^\omega$ of all $\omega$-indexed families $f$ that satisfy $\phi$ when $f(n)$ is used for all free occurrences of the variable $x_n$ in $\phi$. Cylindrification and diagonals are as in the previous example. 
}
\end{example}

\begin{example} \label{ex3} {\em 
An $\omega$-dimensional cylindric algebra is obtained by taking a set $\Sigma$ of sentences in a first-order language and considering equivalence classes of formulas under the relation $\Sigma\vdash \phi \leftrightarrow \psi$ and usual ordering given by implication. This is a \ts{ba} with cylindrification $c_n$ of the equivalence class of $\phi$ being the equivalence class $\exists_n \phi$ and the diagonal $d_{m,n}$ being the equivalence class of the formula $x_m=x_n$. 
}
\end{example}

For further details of these examples, see \cite[p.~6-10]{HMT85a} and \cite[Ch.~4]{HMT85b}. 

\begin{defn} \label{defn:substitution}
For an $I$-dimensional cylindric algebra, define for each $i,j\in I$ the {\em substitution operator} $S_j^{\, i} x = c_i(d_{i,j}\wedge x)$ for $i\neq j$ and $S_{i}^{\, i} x = x$. 
\end{defn}

When interpreted in the cylindric algebra of Example~\ref{ex3}, the substitution operator $S_j^{\, i}$ applied to the equivalence class of a formula $\phi$ is the equivalence class of the formula obtained from $\phi$ by substituting the variable $x_j$ in place of all free occurrences of $x_i$ in the formula $\phi$. Substitution operators are Boolean endomorphisms of the cylindric algebra, as one would expect of substitution. They play an important role in the theory, and are used to develop the connection between relation algebras and cylindric algebras \cite[Ch.~4]{HMT85b}. In fact, the notion of cylindric algebras can be developed by taking these substitution operators as primitive \cite{Cirulis94}. 

\section{von Neumann algebras}

Our purpose is treat several details of von Neumann algebras that feature in later sections. We assume a basic familiarity with the topic as is found in \cite{KR97a,KR97b}. In particular, a von Neumann algebra $\mc{M}$ is a unital $*$-subalgebra of the bounded operators $B(\mc{H})$ of a Hilbert space $\mc{H}$ that is closed in the weak operator topology. Equivalently, it is a subalgebra of $B(\mc{H})$ that is equal to its double commutant $\mc{M}^{cc}$ where the commutant $S^c$ of $S\subseteq B(\mc{H})$ is the collection of all operators commuting with each operator in $S$. 

\begin{defn}
A self-adjoint projection of a von Neumann algebra $\mc{M}$ is an element $p\in\mc{M}$ with $p=p^*=p^2$. 
\end{defn}

We often refer to these simply as projections since we do not consider others, and use $P(\mc{M})$ for the projections of $\mc{M}$. If $\mc{M}$ is a subalgebra of $B(\mc{H})$, then projections of $\mc{M}$ are projection operators of $\mc{H}$, so correspond to closed subspaces of $\mc{H}$. An essential point of difference between von Neumann algebras and general C$^*$-algebras is that von Neumann algebras have a rich supply of projections that largely determine their behavior. 

\begin{defn}
An ortholattice (abbrev. \ts{ol}) is a bounded lattice $L$ with a unary operation $\perp$ that is order-inverting, period two, and satisfies $x\wedge x^\perp = 0$ and $x\vee x^\perp = 1$. An orthomodular lattice (abbrev. \ts{oml}) is an \ts{ol} that satisfies $x\leq y \Rightarrow x\vee (x^\perp\wedge y) = y$. 
\end{defn}

For details of \ts{oml}s, see \cite{Kalmbach83}. There is a close tie between von Neumann algebras and \ts{oml}s. To describe this, we recall that $C$ is a {\em complete sublattice} of a complete lattice $L$ if $C$ is closed under all (even infinite) joins and meets of $L$. 

\begin{thm}
If $\mc{M}$ is a von Neumann subalgebra of the bounded operators $B(\mc{H})$ of a Hilbert space $\mc{H}$, then $P(\mc{M})$ is a complete sub-ortholattice of the complete \ts{oml} $P(\mc{H})$ of projections of $\mc{H}$. Further, $\mc{M}$ is generated as a von Neumann algebra by its projections. 
\end{thm}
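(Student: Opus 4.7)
The plan is to treat the two claims separately, both resting on the double commutant theorem $\mc{M}=\mc{M}^{cc}$ together with the fact that the commutant $\mc{M}^c$ is $*$-closed (since $\mc{M}$ is).

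For the first claim, I would first recall the standard fact that $P(\mc{H})$ is a complete \ts{oml} with orthocomplement $p^\perp = I-p$, meet of a family $\{p_\alpha\}$ given by the projection onto $\bigcap_\alpha p_\alpha(\mc{H})$, and join given by the projection onto the closed linear span of the $p_\alpha(\mc{H})$. Closure of $P(\mc{M})$ under orthocomplement is immediate because $\mc{M}$ is unital, and the bounds $0, I$ lie in $\mc{M}$ for the same reason. The substantive point is closure under arbitrary meets: given $\{p_\alpha\}\subseteq P(\mc{M})$, let $q\in P(\mc{H})$ be the projection onto $K=\bigcap_\alpha p_\alpha(\mc{H})$. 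To show $q\in\mc{M}$ it suffices, by the double commutant theorem, to show $uq=qu$ for every $u\in\mc{M}^c$. Since $\mc{M}^c$ is $*$-closed, both $u$ and $u^*$ commute with each $p_\alpha$, so both preserve each range $p_\alpha(\mc{H})$, hence both preserve $K$. Standard arguments then give $uq=qu$. Joins follow from meets via De Morgan using orthocomplement closure.

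For the second claim, any $a\in\mc{M}$ decomposes as $a=b+ic$ with $b,c$ self-adjoint in $\mc{M}$, so it suffices to show each self-adjoint $b\in\mc{M}$ lies in the von Neumann algebra generated by $P(\mc{M})$. By the spectral theorem, $b=\int\lambda\,dE_b(\lambda)$, and the spectral projections $E_b(B)$ for Borel sets $B$ commute with every operator that commutes with $b$; since $\mc{M}^c\subseteq\{b\}^c$, this places each $E_b(B)$ in $\mc{M}^{cc}=\mc{M}$, i.e.\ in $P(\mc{M})$. The Riemann–Stieltjes style simple-function approximations to the spectral integral converge to $b$ in operator norm and are linear combinations of elements of $P(\mc{M})$, so $b$ lies in the norm-closure, hence certainly in the von Neumann algebra, generated by $P(\mc{M})$.

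The main obstacle is the meet step: one must argue that the projection onto the intersection of a possibly infinite family of closed subspaces is recaptured by the double commutant. This is clean once one notes both that $\mc{M}^c$ is $*$-closed and that a bounded operator preserves a closed subspace $V$ iff it commutes with the projection onto $V$. The generation claim is then largely bookkeeping, being a direct application of the spectral theorem together with the same double commutant observation applied to a single self-adjoint element.
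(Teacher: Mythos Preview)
Your argument is essentially correct, and in fact the paper does not supply its own proof of this theorem at all: it simply cites the standard references \cite{KR97a,KR97b,Hamhalter03}. So there is nothing to compare against beyond noting that your outline is the standard one found in those texts.

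One small imprecision worth cleaning up: in your final paragraph you assert that ``a bounded operator preserves a closed subspace $V$ iff it commutes with the projection onto $V$.'' The ``only if'' direction is false in general (a unilateral shift preserves many invariant subspaces without commuting with the corresponding projections). What is true, and what you actually use in the body of the argument, is that $u$ commutes with the projection onto $V$ iff both $u$ and $u^*$ preserve $V$ (equivalently, $u$ preserves both $V$ and $V^\perp$). Your proof handles this correctly by invoking the $*$-closure of $\mc{M}^c$; just make sure the summary sentence matches the argument.
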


This theorem is found in \cite{KR97a,KR97b,Hamhalter03}. In fact, much more is true, although we won't need it. Modulo certain small pathologies, Dye's theorem shows that the Jordan structure (structure under the symmetrized product $x\circ y = \frac{1}{2}(xy+yx)$) is determined by the projections $P(\mc{M})$. See \cite{Hamhalter03} for details. 

\begin{defn} 
The tensor product $\mc{H}\otimes\mc{K}$ of Hilbert spaces $\mc{H}$ and $\mc{K}$ is the metric space completion of their algebraic tensor product. 
\end{defn}

If $(a_i)_I$ is an orthonormal basis (abbrev. \ts{onb}) of $\mc{H}$ and $(b_j)_J$ is an \ts{onb} of $\mc{K}$, it is well-known that $(a_i\otimes b_j)_{I\times J}$ is an \ts{onb} of $\mc{H}\otimes\mc{K}$. This is the case whether the spaces are separable or not. 
Also, if $A$ is a bounded operator of $\mc{H}$ and $B$ is a bounded operator of $\mc{K}$, then there is a bounded operator $A\otimes B$ on the tensor product extending the algebraic tensor product of $A$ and $B$. 

\begin{defn}
If $\mc{M}$ is a von Neumann subalgebra of $B(\mc{H})$ and $\mc{N}$ is a von Neumann subalgebra of $B(\mc{K})$, then $\mc{M}\otimes\mc{N}$ is the von Neumann subalgebra of $B(\mc{H}\otimes\mc{K})$ generated by all $A\otimes B$ where $A\in\mc{M}$ and $B\in\mc{N}$. 
\end{defn}

We consider an infinite tensor product of Hilbert spaces. There are two paths, often called the complete and the incomplete (or Guichardet) tensor product. The name refers to the construction, both methods produce Hilbert spaces which are complete metric spaces. For details, see \cite{vN39,TW01,Parthasarathy92}. 

The {\em incomplete tensor product} assumes a unit vector $\xi_i$ is chosen for each $\mc{H}_i$ and takes the metric space completion of all pure tensors $\otimes_I v_i$ where $v_i=\xi_i$ for all but finitely many $i\in I$. The incomplete tensor is written $\bigotimes^\xi_I \mc{H}_i$. When applied to a countable family of separable Hilbert spaces it produces a separable Hilbert space and the construction is associative. 

The {\em complete tensor product} $\bigotimes_I\mc{H}_i$ does not require a choice of unit vectors. It is much larger. Even when applied to a countable family of separable Hilbert spaces, it is not separable. The details of the construction are involved, and won't be necessary for us. Roughly, one considers all tensors $\otimes_I v_n$ for which $\prod_I \| v_i\|^2$ is finite, then takes a metric space completion with respect to a certain inner product. While the complete tensor product is in general not associative, the following holds \cite[p.~43]{vN39}.

\begin{prop}\label{prop:associative}
For $i\in I$ we have $\mc{H}_i\otimes \bigotimes_{j\neq i}\mc{H}_j \simeq \bigotimes_I\mc{H}_i$. 
\end{prop}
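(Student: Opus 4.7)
The plan is to construct an explicit unitary $\Phi \colon \mc{H}_i \otimes \bigotimes_{j\neq i}\mc{H}_j \to \bigotimes_I \mc{H}_k$ that extends the obvious map on pure tensors $v \otimes (\otimes_{j\neq i} v_j) \mapsto \otimes_I w_k$, where $w_i = v$ and $w_j = v_j$ for $j \neq i$. Since each side is built as a completion with a natural dense subspace of (finite combinations of) pure tensors, the strategy is the familiar one: define $\Phi$ on a dense subspace via pure tensors, show it is inner-product preserving there, extend by continuity to an isometry, then verify that its image is dense.

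First I would set up the map. Given $v\in\mc{H}_i$ and a pure tensor $\otimes_{j\neq i} v_j$ arising from a family $(v_j)_{j\neq i}$ with $\prod_{j\neq i}\|v_j\|^2$ convergent, adjoining $v_i := v$ yields an $I$-indexed family with $\prod_I \|w_k\|^2 = \|v\|^2 \prod_{j\neq i}\|v_j\|^2$ still convergent, so $\otimes_I w_k$ is a legitimate vector in $\bigotimes_I \mc{H}_k$. This assignment is bilinear, so extends linearly to $\mc{H}_i \otimes_{\mathrm{alg}} \bigotimes_{j\neq i}\mc{H}_j$. To see it preserves inner products, compare
\[
\langle v\otimes\otimes_{j\neq i}v_j,\, v'\otimes\otimes_{j\neq i}v_j'\rangle = \langle v,v'\rangle\prod_{j\neq i}\langle v_j,v_j'\rangle
\]
with the defining inner product on the right-hand side, $\prod_I\langle w_k, w_k'\rangle$; these agree by separating the factor indexed by $i$ from the rest of the product. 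Extension by continuity then gives an isometry $\Phi$ of Hilbert space completions.

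For surjectivity, every pure tensor $\otimes_I w_k$ in $\bigotimes_I \mc{H}_k$ lies in the image, being $\Phi(w_i \otimes \otimes_{j\neq i}w_j)$, and such pure tensors span a dense subspace by construction of the complete tensor product. Hence $\Phi$ is a unitary. The main obstacle is not conceptual but technical: one must check that the notion of convergence (in von Neumann's sense, where $\prod_I|a_k|$ is declared convergent only when $\sum_I |\,1 - |a_k|\,|<\infty$) used to define the building-block pure tensors, the absolute convergence of the inner product series $\prod_I\langle w_k, w_k'\rangle$, and the equivalence relation used to pass to the completion all behave compatibly when the $i$-th coordinate is separated off from the rest. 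This compatibility is precisely the content of associativity asserted in \cite{vN39}, and once it is in place the pure-tensor identification propagates unambiguously to the completions.
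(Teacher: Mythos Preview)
The paper does not actually prove this proposition: it is stated as a citation of von Neumann's original construction \cite[p.~43]{vN39}, with no argument given. So there is no ``paper's own proof'' to compare against.

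Your outline is the standard route and is essentially what one finds in von Neumann's paper: define the map on pure tensors, check it is isometric via the multiplicativity of the inner product, extend by continuity, and verify dense image. You have also correctly identified where the real work lies, namely that von Neumann's convergence convention for infinite products (requiring $\sum_I |1-|a_k||<\infty$) and the equivalence relation on families used to build the completion must be shown to split cleanly into the $i$-th factor and the remaining factors. That verification is what the cited page in \cite{vN39} carries out, and once it is done the rest of your argument goes through without issue.
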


The above ideas can be used to give notions of the tensor product of an infinite family of von Neumann algebras $\mc{M}_i$ where $\mc{M}_i$ acts on the Hilbert space $\mc{H}_i$ for $i\in I$. We won't require this, and leave the interested reader to see for example \cite{Nakagami70}.

\section{Quantum logic}

There are many versions of propositional ``quantum logic'' based in the original idea of Birkhoff and von Neumann \cite{BvN37} of interpreting terms in a projection lattice $P(\mc{H})$ or some generalization (see for example \cite{DGG04,Kalmbach83}). To give the bare bones of one approach, take an infinite set $x_1,x_2,\ldots$ of propositional variables and connectives $\wedge,\vee,\perp,0,1$. Terms $t$ formed in the usual recursive way are called propositions. 

\begin{defn}
Orthologic consists of the propositions $t$ such that the equation $t\approx 1$ is valid in every \ts{ol}; orthomodular logic consists of such $t$ with $t\approx 1$ valid in every \ts{oml}; and quantum logic is such $t$ valid in each projection lattice $P(\mc{H})$. 
\end{defn}

One issue is the lack of an implication in this treatment, and it appears that there is no adequate solution to this issue. The common approach is to view the {\em Sasaki hook} as implication. 

\begin{defn}
The Sasaki product is given by $x\cdot_s y = x\wedge(x^\perp\vee y)$ and the Sasaki hook by $x\to_s y = x^\perp\vee (x\wedge y)$. 
\end{defn}

While the Sasaki hook does not share all properties one would like of implication \cite{Kalmbach83}, it and the Sasaki product are residuated \cite{BJ72,Foulis60}, meaning that in any \ts{oml} we have 
\[x\cdot_s y\leq z \Leftrightarrow y\leq x\to_s z.\]

There are versions of predicate orthologic, orthomodular logic, and quantum logic developed using infinite joins and meets to interpret quantifiers. We refer to \cite{DGG04}. A different approach to predicate quantum logic, more tightly tied to projection lattices, is given in \cite{Weaver01}. In Section~\ref{section:logic} we view our quantum cylindric algebras through this lens. 

\section{Quantum monadic and cylindric algebras}

Adaptations of classical notions are straightforward. 

\begin{defn}
A monadic \ts{ol} $(L,\exists)$ is an \ts{ol} $L$ with unary operation $\exists$ where
\begin{itemize}[leftmargin=.4in]
\item[{\em \small (Q$_1$)}] $\exists 0 = 0$,
\item[{\em \small (Q$_2$)}] $p\leq \exists p$,
\item[{\em \small (Q$_3$)}] $\exists(p\vee q) = \exists p \vee \exists q$,
\item[{\em \small (Q$_4$)}] $\exists\exists p = \exists p$,
\item[{\em \small (Q$_5$)}] $\exists (\exists p)^\perp = (\exists p)^\perp$.
\end{itemize}
This is a quantum monadic algebra if $L$ is additionally an \ts{oml}.
\end{defn}

As in the classical case, there is an alternate way to view quantifiers and monadic \ts{ol}s. A subalgebra $S$ of an \ts{ol} $L$ is an {\em approximating subalgebra} if for each $a\in L$ there is a largest element $a^-\in S$ below $a$ and a least element $a^+\in S$ above $a$. When one of these exists, so does the other, and they are related by $a^-=((a^\perp)^+)^\perp$. A subalgebra $S$ of a complete \ts{ol} $L$ is a complete subalgebra if it is closed under arbitrary joins and meets as taken $L$. A complete subalgebra $S$ is approximating with $a^+=\bigwedge\{s\in S:a\leq s\}$. 

\begin{prop}
Let $L$ be an \ts{ol}. If $S\leq L$ is approximating, then $\exists a = a^+$ and $\forall a = a^-$ are existential and universal quantifiers. Conversely, if $\exists$ is an existential quantifier, then $S=\{\exists a:a\in L\}$ is an approximating subalgebra. 
\end{prop}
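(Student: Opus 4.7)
The plan is to verify the two directions separately, both by purely lattice-theoretic arguments parallel to the classical Halmos treatment; no orthomodularity is needed, consistent with the statement being given for all \ts{ol}s.

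For the forward direction, I assume $S$ is an approximating subalgebra and verify that $\exists a = a^+$ satisfies (Q$_1$)--(Q$_5$). Conditions (Q$_1$) and (Q$_2$) are immediate (using $0\in S$ and the defining inequality $a\leq a^+$). For (Q$_3$), I would show both inclusions: $(p\vee q)^+\leq p^+\vee q^+$ because $p^+\vee q^+\in S$ and lies above $p\vee q$, and the reverse because $p\leq (p\vee q)^+$ forces $p^+\leq (p\vee q)^+$ by minimality (similarly for $q$). (Q$_4$) and (Q$_5$) both reduce to the observation that $s^+=s$ whenever $s\in S$; for (Q$_5$) one additionally uses that $(\exists p)^\perp\in S$ because $S$ is closed under $\perp$ as a sub-\ts{ol}. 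The statement about $\forall$ follows either by a dual argument or, more efficiently, by checking $a^-=((a^\perp)^+)^\perp=(\exists a^\perp)^\perp$ and invoking the duality recorded after Lemma~\ref{lemma:a}.

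For the converse, let $S=\{\exists a:a\in L\}$. I would first show $S$ is a sub-\ts{ol}: $0\in S$ by (Q$_1$); $1\in S$ since (Q$_2$) forces $\exists 1 = 1$; $S$ is closed under $\vee$ by (Q$_3$); and $S$ is closed under $\perp$ by (Q$_5$), which reads $(\exists p)^\perp=\exists((\exists p)^\perp)$. Closure under $\wedge$ is then automatic by De Morgan. The next step is to verify that for any $a\in L$ the element $\exists a$ is the least member of $S$ above $a$, which is precisely the requirement $a^+=\exists a$. The small but essential point here is extracting monotonicity of $\exists$ from (Q$_3$): if $p\leq q$ then $\exists q=\exists(p\vee q)=\exists p\vee \exists q$, so $\exists p\leq \exists q$. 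Combined with (Q$_4$), whenever $a\leq \exists b\in S$ one obtains $\exists a\leq \exists\exists b=\exists b$, giving minimality; since also $a\leq \exists a$, the element $\exists a$ serves as $a^+$. Existence of $a^-$ is then immediate from $a^-=(\exists a^\perp)^\perp\in S$.

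The argument is elementary and essentially the classical Halmos argument dressed in \ts{ol} language; I expect no real obstacle beyond carefully tracking which axiom supplies which closure property. The one mildly delicate moment is noticing that (Q$_5$), rather than any separate axiom, is what delivers closure of $S$ under $\perp$ in the strong form $(\exists p)^\perp=\exists(\exists p)^\perp$, after which everything else assembles mechanically.
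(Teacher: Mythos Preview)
Your proposal is correct and follows essentially the same approach as the paper's proof: in the forward direction you verify (Q$_1$)--(Q$_5$) using that $0,\exists a,(\exists a)^\perp\in S$ and the two-sided argument for (Q$_3$), and in the converse you use (Q$_1$), (Q$_3$), (Q$_5$) for the subalgebra closure and (Q$_2$), (Q$_4$) for minimality of $\exists a$. Your version is slightly more explicit in spelling out monotonicity of $\exists$ and closure under $1$ and $\wedge$, but these are exactly the details the paper leaves implicit.
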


\begin{proof}
Suppose $S$ is approximating and $\exists a = a^+$ is the least element in $S$ above $a$. By definition {\small (Q$_2$)} holds, and {\small (Q$_1$), (Q$_4$), (Q$_5$)} hold since $0, \exists a, (\exists a)^\perp\in S$. Clearly $\exists a \vee \exists b\in S$ and lies above $a\vee b$. But any element of $S$ that lies above $a\vee b$ lies above $\exists a$ and $\exists b$, so lies above $\exists a\vee\exists b$. So $\exists (a\vee b)=\exists a\vee\exists b$, giving {\small (Q$_3$)}. So $\exists$ is a quantifier and by definition $\exists a $ is least in $S$ above $a$. From basic properties of orthocomplementation $\forall a = (\exists a^\perp)^\perp$ is largest in $S$ beneath $a$. Conversely, if $\exists$ is a quantifier on an \ts{ol} $L$, then {\small (Q$_1$), (Q$_3$), (Q$_5$)} give that $S=\{\exists a:a\in L\}$ is a subalgebra, and {\small (Q$_2$), (Q$_4$)} imply that $\exists a$ is least in $S$ above $a$, so $S$ is approximating. 
\end{proof}

The following observation is trivial, but useful. It requires only the definition of a pair of residuated maps and their well-known properties \cite{BJ72}. The point is that $\exists a \leq b$ and $a\leq\forall b$ are each equivalent to saying that there is an element $s$ of the associated approximating subalgebra with $a\leq s\leq b$. 

\begin{prop} \label{prop:quantifier residuated}
In a monadic \ts{ol}, $\exists a\leq b \Leftrightarrow a\leq\forall b$. Therefore $\exists$ preserves all existing joins and $\forall$ preserves all existing meets. 
\end{prop}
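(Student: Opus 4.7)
The plan is to exploit the approximating subalgebra description given in the preceding proposition, together with the observation flagged in the hint: both sides of the biconditional should reduce to the single assertion that the interval $[a,b]$ in $L$ meets the approximating subalgebra $S = \{\exists x : x \in L\}$. Since $S$ is closed under $\perp$, the elements $\forall b = (\exists b^\perp)^\perp$ automatically lie in $S$, so $S$ is simultaneously the range of $\exists$ and of $\forall$.

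First I would record the two characterizations: $\exists a$ is the least element of $S$ above $a$, and dually $\forall b$ is the greatest element of $S$ below $b$. With these in hand, the equivalence is essentially a tautology. For the forward direction, if $\exists a \leq b$, then $\exists a$ itself is a witness in $S$ with $a \leq \exists a \leq b$, and since $\forall b$ is the greatest element of $S$ below $b$, we conclude $\exists a \leq \forall b$, in particular $a \leq \forall b$. The reverse direction is symmetric: if $a \leq \forall b$, then $\forall b$ witnesses an element of $S$ with $a \leq \forall b \leq b$, and minimality of $\exists a$ gives $\exists a \leq \forall b \leq b$. Both directions thus pass through the same intermediate statement ``there exists $s \in S$ with $a \leq s \leq b$,'' which is the conceptual content.

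For the second sentence, I would use the standard consequence that a left adjoint preserves arbitrary existing joins. Suppose $\bigvee_i a_i$ exists in $L$. Monotonicity of $\exists$ (which follows from (Q$_3$)) shows $\exists \bigvee_i a_i$ is an upper bound of $\{\exists a_i\}$. For the universal property, suppose $\exists a_i \leq b$ for every $i$; by the residuation just proved, $a_i \leq \forall b$ for each $i$, hence $\bigvee_i a_i \leq \forall b$, and applying the residuation in the opposite direction gives $\exists \bigvee_i a_i \leq b$. So $\exists \bigvee_i a_i$ is the least upper bound, giving $\exists \bigvee_i a_i = \bigvee_i \exists a_i$. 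The statement for $\forall$ and meets is entirely dual.

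I do not anticipate a serious obstacle: the only subtle points are making sure $\forall b$ is genuinely in $S$ (which uses that $S$ is a sub-ortholattice, established in the previous proposition via (Q$_1$), (Q$_3$), (Q$_5$)) and that monotonicity of $\exists$ is available. Neither relies on orthomodularity, which is appropriate since the statement is about monadic \textsc{ol}s rather than quantum monadic algebras.
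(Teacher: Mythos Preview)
Your proposal is correct and follows exactly the approach the paper sketches: reduce both $\exists a \leq b$ and $a \leq \forall b$ to the single statement that some $s$ in the approximating subalgebra $S$ satisfies $a \leq s \leq b$, then invoke the standard residuation argument for preservation of joins and meets. The paper treats the result as essentially self-evident from this observation and the theory of residuated pairs, so your write-up is simply a more explicit version of the same reasoning.
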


Monadic \ts{ol}s and quantum monadic algebras fall under the scope of universal algebra in the well-known sense \cite{BS81}. Thus there are standard notions of homomorphisms, subalgebras and products for these structures. It is easy to slightly reformulate their definitions to be purely equational, and thus both form varieties. We won't look closely at universal algebraic properties, but provide some basics. Here the difference between monadic \ts{ol}s and quantum monadic algebras is essential. 

\begin{prop}
Every quantum monadic algebra $(L,\exists)$ is congruence distributive and congruence permutable; and its congruences correspond to ideals $I$ of $L$ such that for all $a\in I$ and $b\in L$ we have $\exists a\in I$ and $b\wedge (a\vee b^\perp)\in I$. 
\end{prop}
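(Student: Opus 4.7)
The plan is to reduce the statement to the well-established congruence theory of \ts{oml}s and verify that the unary operation $\exists$ introduces no new difficulty. Congruence distributivity is inherited from the lattice reduct: since the variety of lattices is congruence distributive, the Jónsson distributive terms built from $\wedge$ and $\vee$ alone witness congruence distributivity for any variety carrying a lattice reduct. Congruence permutability follows from the classical fact (see \cite{Kalmbach83}) that every \ts{oml} admits a Mal'cev term in the ortholattice signature satisfying $m(x,x,z)=z$ and $m(x,z,z)=x$; the same term serves as a Mal'cev term for $(L,\exists)$ since $\exists$ does not occur in it.

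For the ideal correspondence, I would set up the two directions of the bijection separately. Given a congruence $\theta$, define $I_\theta = \{a\in L : a\,\theta\,0\}$. Closure under joins and downward closure are routine. The p-ideal condition follows from a standard computation: if $a\,\theta\,0$ then $a\vee b^\perp\,\theta\,b^\perp$, hence $b\wedge(a\vee b^\perp)\,\theta\,b\wedge b^\perp=0$. Closure under $\exists$ is immediate since $\exists 0=0$ and $\theta$ respects $\exists$.

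Conversely, given an ideal $I$ satisfying both closure conditions, define $\theta_I$ by the standard \ts{oml} recipe $a\,\theta_I\,b$ iff $(a\wedge b^\perp)\vee(a^\perp\wedge b)\in I$, and verify this is a congruence. Compatibility with $\wedge,\vee,{}^\perp$ is the classical Kalmbach argument, using the p-ideal condition in an essential way. Compatibility with $\exists$ reduces, via symmetry and orthomodularity, to the case $a\leq b$: then $a\,\theta_I\,b$ becomes $b\wedge a^\perp\in I$, and since $b=a\vee(b\wedge a^\perp)$ by orthomodularity, (Q$_3$) gives $\exists b=\exists a\vee\exists(b\wedge a^\perp)$; closure of $I$ under $\exists$ places $\exists(b\wedge a^\perp)$ in $I$, and a single application of the p-ideal condition then puts $\exists b\wedge(\exists a)^\perp$ in $I$, as required.

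The main obstacle is establishing transitivity of $\theta_I$ on the \ts{oml} reduct; this is precisely where the p-ideal condition is indispensable and constitutes the nontrivial content of the classical \ts{oml} congruence theorem. I would either invoke this directly from \cite{Kalmbach83} or reproduce its short argument. Once the bijection $\theta\mapsto I_\theta$, $I\mapsto\theta_I$ is in hand for the \ts{oml} reduct, the supplementary requirement that $I$ be closed under $\exists$ corresponds exactly to the requirement that $\theta_I$ respect $\exists$, completing the correspondence.
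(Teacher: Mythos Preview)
Your proposal is correct and follows the same overall strategy as the paper: inherit congruence distributivity and permutability from the \ts{oml} reduct, invoke the classical correspondence between \ts{oml} congruences and $p$-ideals, and then check separately that closure of $I$ under $\exists$ matches compatibility of $\theta$ with $\exists$.

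The one point worth noting is that the paper uses a different (equivalent) description of the congruence associated to a $p$-ideal, namely $x\,\theta_I\,y$ iff $x\vee a = y\vee a$ for some $a\in I$, rather than your symmetric-difference formulation. This choice makes the verification of $\exists$-compatibility a one-liner: from $x\vee a = y\vee a$ and {\small (Q$_3$)} one gets $\exists x\vee\exists a = \exists y\vee\exists a$ with $\exists a\in I$, so $(\exists x,\exists y)\in\theta_I$ immediately. Your route via the comparable case, orthomodularity, and a final application of the $p$-ideal condition is perfectly valid but slightly longer; the paper's join-based description meshes directly with the join-preservation axiom {\small (Q$_3$)} and avoids the extra step.
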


\begin{proof}
Since \ts{oml}s are congruence distributive and permutable \cite{BH00} the same is true of quantum monadic algebras. Further, \cite{BH00} there is a correspondence between congruences $\theta$ of $L$ and ideals $I$ of $L$ that satisfy $a\in I$ and $b\in L$ imply $b\wedge (a\vee b^\perp)\in I$. Such ideals are called $p$-ideals. This associates to $\theta$ the $p$-ideal $0/\theta$ and associates to the $p$-ideal $I$ the congruence $\{(x,y):x\vee a = y\vee a\mbox{ for some }a\in I\}$. If $\theta$ is compatible with $\exists$, then since $\exists 0 = 0$ we have $I$ is closed under $\exists$. Conversely, if $I$ is closed under $\exists$ and $x\vee a = y\vee a$ for some $a\in I$, then $\exists x \vee \exists a = \exists(x\vee a)=\exists(y\vee a)=\exists y\vee\exists a$, so $(\exists x,\exists y)\in \theta$. 
\end{proof}

Elements $x,y$ of an \ts{oml} $L$ {\em commute} if $x=(x\wedge y)\vee(x\wedge y')$. Commutativity is symmetric in an \ts{oml}. An element $c$ is {\em central} if it commutes with all $x\in L$. An element $c$ is central iff ${\downarrow}\, c$ is a p-ideal. Central elements of an \ts{oml} correspond to its direct product decompositions. Since $\exists c = c$ iff $\exists c^\perp=c^\perp$, the direct product decompositions of a quantum monadic algebra correspond to central elements $c$ with $\exists c = c$. See \cite{BH00} for details about commuting elements in an \ts{oml}. 

\begin{prop}
If $(L,\exists)$ is a quantum monadic algebra and $a\in L$ with $a=\exists a$, then $C(a) = \{x\in L: a\mbox{ commutes with }x\}$ is a subalgebra. 
\end{prop}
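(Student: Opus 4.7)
My plan is to split the claim into two parts: first, that $C(a)$ is a sub-ortholattice of $L$ (which is already known from general OML theory), and second, that $C(a)$ is closed under $\exists$. The first part is essentially a citation: in any \ts{oml}, the commutant of a single element is a sub-\ts{oml}, a standard fact found, e.g., in \cite{BH00}. So the real content is closure under $\exists$.

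For closure under $\exists$, the key observation is that the hypothesis $a=\exists a$ together with {\small (Q$_5$)} yields $\exists a^\perp = a^\perp$ as well, since {\small (Q$_5$)} reads $\exists(\exists a)^\perp = (\exists a)^\perp$. Combined with Lemma~\ref{lemma:a}, which gives the Halmos identity {\small (Q$_6$)}: $\exists(p\wedge\exists q)=\exists p\wedge\exists q$, this lets me ``pull $\exists$ across'' conjunctions with $a$ and $a^\perp$ freely.

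The computation then runs as follows. Take $x\in C(a)$, so $x=(x\wedge a)\vee(x\wedge a^\perp)$. Applying $\exists$ and using {\small (Q$_3$)} yields
\[
\exists x \;=\; \exists(x\wedge a)\vee\exists(x\wedge a^\perp) \;=\; \exists(x\wedge \exists a)\vee\exists(x\wedge \exists a^\perp),
\]
and one application of {\small (Q$_6$)} to each term gives
\[
\exists x \;=\; (\exists x\wedge \exists a)\vee(\exists x\wedge \exists a^\perp) \;=\; (\exists x \wedge a)\vee(\exists x\wedge a^\perp).
\]
This is precisely the condition that $\exists x$ commutes with $a$, so $\exists x\in C(a)$.

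There is no real obstacle here; the main thing to be careful about is invoking the right equivalent form of the quantifier axioms ({\small (Q$_6$)} from Lemma~\ref{lemma:a}) and observing that $a=\exists a$ forces $a^\perp=\exists a^\perp$ via {\small (Q$_5$)}. The closure of $C(a)$ under $\wedge,\vee,{}^\perp,0,1$ is a purely lattice-theoretic statement about \ts{oml}s that needs no input from the quantifier.
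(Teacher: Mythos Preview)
Your argument has a genuine gap: you invoke Lemma~\ref{lemma:a}, i.e.\ the identity {\small (Q$_6$)} $\exists(p\wedge\exists q)=\exists p\wedge\exists q$, but that lemma is stated and proved for Boolean algebras only. In a quantum monadic algebra {\small (Q$_6$)} can fail; the paper in fact exhibits an explicit finite \ts{oml} with a quantifier in which $\exists(p\wedge\exists q)=0$ while $\exists p\wedge\exists q$ is an atom. So the passage from $\exists(x\wedge\exists a)$ to $\exists x\wedge\exists a$ (and likewise with $a^\perp$) is unjustified, and your displayed chain collapses at exactly that step.

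The paper's proof starts the same way---it cites that $C(a)$ is a sub-\ts{oml}, writes $\exists x=\exists(x\wedge a)\vee\exists(x\wedge a^\perp)$ via {\small (Q$_3$)}, and observes $\exists(x\wedge a)\leq a$ and $\exists(x\wedge a^\perp)\leq a^\perp$ from $\exists a=a$, $\exists a^\perp=a^\perp$---but it then avoids {\small (Q$_6$)} entirely. Instead it notes that among the four elements $a,a^\perp,\exists(x\wedge a),\exists(x\wedge a^\perp)$, each pair commutes (the nontrivial ones because one element lies below the other or its orthocomplement), so by the Foulis--Holland theorem the sublattice they generate is distributive. A direct distributive computation then yields $(\exists x\vee a)\wedge(\exists x\vee a^\perp)=\exists x$, which is (an equivalent form of) the statement that $\exists x$ commutes with $a$. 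You need to replace your appeal to {\small (Q$_6$)} with this Foulis--Holland argument or something equivalent.
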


\begin{proof}
For any $a\in L$, it is known \cite{BH00,Kalmbach83} that $C(a)$ is a sub-\ts{oml} of $L$. It remains to show that if $a$ commutes with $x$, then $a$ commutes with $\exists x$. Using that $a,x$ commute and that a quantifier preserves joins, 
$\exists x = \exists((x\wedge a)\vee(x\wedge a^\perp)) = \exists(x\wedge a) \vee\exists(x\wedge a^\perp)$. Since $\exists(x\wedge a)\leq\exists a = a$ and $\exists(x\wedge a^\perp) \leq\exists a^\perp = a^\perp$, it follows that from any three of the elements $a, a^\perp, \exists(x\wedge a)$ and $\exists(x\wedge a^\perp)$, one commutes with the other two, so by the Foulis-Holland theorem \cite{BH00}, the sublattice generated by these four elements is distributive. Then $(\exists x \vee a)\wedge(\exists x\vee a^\perp) = [\exists(x\wedge a^\perp)\vee a]\wedge[\exists(x\wedge a)\vee a^\perp]$ and using distributivity this is $\exists(x\wedge a)\vee\exists(x\wedge a^\perp)=\exists x$. So $\exists x$ commutes with $a$. 
\end{proof}

\begin{prop}\label{prop:interval}
If $(L,\exists)$ is a quantum monadic algebra and $a\in L$ with $\exists a = a$, then the interval $([0,a],\exists)$ is a quantum monadic algebra with orthocomplement $x^\# = a\wedge x^\perp$ and this quantum monadic algebra is a factor of the subalgebra $(C(a),\exists)$ of $(L,\exists)$. 
\end{prop}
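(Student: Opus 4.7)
The plan is to separate the statement into two claims and tackle them in order: first, that $([0,a],\exists)$ with $x^\# = a\wedge x^\perp$ is a quantum monadic algebra in its own right; second, that it appears as a direct factor of $(C(a),\exists)$.

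For the first claim, I would begin by invoking the standard fact that the interval $[0,a]$ of any \ts{oml} is again an \ts{oml} under the induced order and orthocomplement $x^\# = a\wedge x^\perp$. To see that $\exists$ restricts to an operation on $[0,a]$, I would note that (Q$_3$) makes $\exists$ monotone, so from $x\leq a$ and $\exists a = a$ we get $\exists x \leq a$. Axioms (Q$_1$)-(Q$_4$) transfer for free, since the bottom element and the lattice operations of $[0,a]$ agree with those of $L$ on elements below $a$. The main obstacle is (Q$_5$), which now reads $\exists(a\wedge(\exists x)^\perp) = a\wedge(\exists x)^\perp$. My plan here is to set $y = \exists x$ and exploit that $\exists y = y$ by (Q$_4$), $\exists y^\perp = y^\perp$ by (Q$_5$) in $L$, and $\exists a = a$: since $a\wedge y^\perp \leq a$ and $a\wedge y^\perp\leq y^\perp$, monotonicity forces $\exists(a\wedge y^\perp) \leq a\wedge y^\perp$, while (Q$_2$) gives the reverse inequality, so equality holds.

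For the factor claim, I would first observe that $a\in C(a)$ trivially and that every element of $C(a)$ commutes with $a$ by definition, so $a$ is a central element of the \ts{oml} $C(a)$. Since $\exists a = a$, the remarks preceding this proposition on direct product decompositions apply: $a$ induces a decomposition of the quantum monadic algebra $(C(a),\exists)$ as a product of two quantum monadic algebras, corresponding to the intervals $[0,a]$ and $[0,a^\perp]$ computed inside $C(a)$, each with the restriction of $\exists$ and the orthocomplement induced by the central element. Finally, since comparable elements commute, every $x\leq a$ already lies in $C(a)$, so the interval $[0,a]$ as computed inside $C(a)$ coincides with $[0,a]$ as a subset of $L$, and the induced orthocomplement is precisely $x^\#$. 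This identifies the first factor of the decomposition with the quantum monadic algebra produced in the first part of the argument, finishing the proof.
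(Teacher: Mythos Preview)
Your proposal is correct and follows essentially the same route as the paper. The paper's proof of {\small (Q$_5$)} is phrased via the approximating subalgebra (both $a$ and $(\exists x)^\perp$ lie in it, hence so does their meet), which is exactly your monotonicity argument in different clothing; for the factor claim, the paper verifies explicitly that $\exists x\wedge a=\exists(x\wedge a)$ for $x\in C(a)$ using the identity $\exists x=\exists(x\wedge a)\vee\exists(x\wedge a^\perp)$ from the preceding proposition, whereas you outsource this step to the remark on central elements with $\exists c=c$ --- a legitimate shortcut, though note that the remark is asserted without proof, so strictly speaking the paper's explicit computation here is what justifies it.
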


\begin{proof}
Since $a$ is central in $C(a)$, it is well-known that $[0,a]$ and $[0,a^\perp]$ are \ts{oml}'s under the indicated orthocomplementations and $L\simeq [0,a]\times[0,a^\perp]$ via $x\mapsto (x\wedge a, x\wedge a^\perp)$. To see that $\exists$ is a quantifier on $[0,a]$ only {\small (Q$_5$)} is non-trivial. But If $x\leq a$, then $(\exists x)^\# = a\wedge(\exists x)^\perp$ is a meet of elements in the approximating subalgebra, so is in the approximating subalgebra, giving {\small (Q$_5$)}. But $\exists x$ commuting with $a$ gives $\exists x = \exists(x\wedge a)\vee\exists(x\wedge a^\perp)$, so $\exists x \wedge a = \exists(x\wedge a)$ and $\exists x\wedge a^\perp = \exists(x\wedge a^\perp)$, showing $x\mapsto (x\wedge a, x\wedge a^\perp)$ is a quantum monadic algebra isomorphism. 
\end{proof}

\begin{example} {\em 
If $L$ is an \ts{ol} and $S$ is a finite subalgebra of $L$, then $S$ is an approximating subalgebra, so yields a monadic \ts{ol}. The diagram below shows a finite \ts{oml} and a subalgebra indicated by filled in dots. This happens to be a Boolean subalgebra. 

\begin{center}
\begin{tikzpicture}[scale=.75]
\draw (0,0) -- (-2,1) -- (-1,2) -- (0,1) -- (-2,2) -- (-1,1) -- (0,2) -- (0,3) -- (-2,2) -- (0,3) -- (-1,2) -- (-2,1) -- (0,0) -- (-1,1) -- (0,0) -- (0,1) -- (1,2) -- (0,3) -- (2,2) -- (1,1) -- (0,0) -- (2,1) -- (0,2) -- (-2,1) -- (0,3) -- (2,2) -- (0,1) -- (0,0) -- (2,1) -- (1,2) -- (0,3) -- (0,2) -- (1,1);
\draw[fill] (0,0) circle (2.5pt); \draw[fill] (-1,1) circle (2.5pt); \draw[fill] (-1,1) circle (2.5pt); \draw[fill] (0,1) circle (2.5pt);
\draw[fill] (0,2) circle (2.5pt); \draw[fill] (-1,2) circle (2.5pt);  \draw[fill] (-2,2) circle (2.5pt);  \draw[fill] (0,3) circle (2.5pt); 
\draw[fill] (-2,1) circle (2.5pt); \draw[fill=white] (1,1) circle (2.5pt); \draw[fill=white] (2,1) circle (2.5pt); \draw[fill=white] (1,2) circle (2.5pt); \draw[fill=white] (2,2) circle (2.5pt); 
\node at (2.5,1) {$p$}; \node at (-2.5,2) {$q$}; 
\end{tikzpicture}
\end{center}
For each element $x$, the smallest filled in dot above $x$ is $\exists x$. In this case, $\exists (p\wedge\exists q) = 0$ while $\exists p \wedge \exists q$ is an atom. This provides an example of a quantum monadic algebra that does not satisfy {\small (Q$_6$)} of Definition~\ref{defn:monadic}. 
}
\end{example}

\begin{example} {\em 
As mentioned, a complete subalgebra of a complete \ts{ol} produces a monadic \ts{ol}. A {\em block} of an \ts{oml} is a maximal Boolean subalgebra. It is known that a block $B$ of $L$ is closed under all existing joins and meets in $L$. So if $L$ is complete, a block is a complete subalgebra, hence produces a quantum monadic algebra. Completeness here is required. There is a method to construct an \ts{oml} $K(L)$ from a bounded lattice $L$ \cite{Harding91,Kalmbach83}. Let $L$ be the bounded lattice $([0,1)\times 2) \oplus (0,1]$, where $[0,1)$ and $(0,1]$ are intervals in the reals and $\oplus$ is ordinal sum. For $x$ the element $\langle 0 < (0,1)\rangle$ in $K(L)$ and $B$ the block of $K(L)$ corresponding to the maximal chain $C=([0,1)\times \{0\})\oplus (0,1]$ of $L$, there is no least element in this block above $x$. See \cite{Harding91} for details. 
}
\end{example}

\begin{example} \label{example:vN} {\em 
Let $\mc{M}$ be a von Neumann subalgebra of the bounded operators $\mc{B}(\mc{H})$ of a Hilbert space $\mc{H}$. The projections $P(\mc{M})$ are a complete subalgebra of the complete \ts{oml} $P(\mc{H})$ of projection operators of $\mc{H}$. So this provides a quantum monadic algebra. The underlying \ts{oml} of this quantum monadic algebra is $P(\mc{H})$ and for any $p\in P(\mc{H})$ we have $\exists p$ is the least projection in $\mc{M}$ above $p$. 
}
\end{example}

\begin{example} {\em 
Let $L$ be a complete \ts{oml}. Then its center, the elements that commute with all others, is a complete subalgebra \cite{BH00,Kalmbach83}. Therefore, this provides a quantum monadic algebra. When applied to the complete \ts{oml} $P(\mc{M})$ of projections of a von Neumann algebra $\mc{M}$, the least central element $\exists p$ above a projection $p\in\mc{M}$ is commonly called the central carrier of $p$. 
}
\end{example}

\begin{example} {\em 
Let $\mc{M}$ be a von Neumann subalgebra of the bounded operators $\mc{B}(\mc{H})$ of a Hilbert space $\mc{H}$. For a projection $p\in \mc{M}$, the {\em corner} $p\mc{M}p$ \cite{Blackadar06} is $*$-isomorphic to a von Neumann subalgebra of $\mc{B}(p\mc{H})$. As they are von Neumann algebras, $P(\mc{M})$ provides a quantum monadic structure on $P(\mc{H})$ and $P(p\mc{M}p)$ such a structure on $P(p\mc{H})$. In fact, this is an instance of Proposition~\ref{prop:interval} with the second an interval of the first. 
}
\end{example}

\begin{example} \label{ex:subfactor} {\em 
For $\mc{N}$ a von Neumann subalgebra of a von Neumann algebra $\mc{M}$ the projections $P(\mc{N})$ are a complete subalgebra of the complete \ts{oml} $P(\mc{M})$ and therefore provides an example of a quantum monadic algebra. This applies, in particular, when both $\mc{M}$ and $\mc{N}$ are factors, a situation called a {\em subfactor}. Here there is an extensive literature \cite{GPJ89,JS97} including an associated {\em Jones tower} of subfactors $\mc{N}\leq\mc{M}\leq \cdots$ that yields a corresponding tower $(L_1,\exists_{\mc N})\leq (L_2,\exists_\mc{M}) \leq\cdots$ of quantum monadic algebras. 
}
\end{example}

The motivating example of a monadic algebra is the \ts{ba} $B^X$ of all functions from a set $X$ into a complete \ts{ba} $B$ with $\exists f$ being the constant function taking value $\bigvee\{f(x):x\in X\}$. A monadic algebra is {\em functional} if it is a subalgebra of such. Halmos \cite{Halmos62} showed that each monadic algebra is isomorphic to a functional one. The situation for monadic \ts{ol}s and quantum monadic algebras is open, but we make some comments.

\begin{remark} {\em
There is a notion of a monadic Heyting algebra \cite{MV57} extending Halmos' monadic (Boolean) algebras. In \cite{BH02} it was shown that every monadic Heyting algebra is isomorphic to a functional one. The method of proof was different from Halmos' result, using instead a construction involving strong amalgamations and MacNeille completions. These tools are available for \ts{ol}s, so this is a natural direction to approach the question. 

Every \ts{oml} is the underlying \ts{ol} of a quantum monadic algebra, one simply takes the subalgebra $\{0,1\}$. It is a long-open question whether every \ts{oml} can be embedded into a complete \ts{oml} \cite{Harding91,Harding98}, so it would be highly problematic to show that every quantum monadic algebra can be embedded into a functional one. One might consider the question for complete quantum monadic algebras whose approximating subalgebra is Boolean. Here there is a slight opening provided by the result that \ts{oml}s can be amalgamated over Boolean subalgebras \cite{BH02}, but this situation is far from clear. 
}
\end{remark}

\begin{defn}
A weak $I$-dimensional cylindric \ts{ol} $(L,c_i,d_{i,j})$ is an \ts{ol} $L$ with family of unary operators $c_i$ and constants $d_{i,j}$ where $i,j\in I$ such that for all $i,j,k\in I$
\begin{itemize}[leftmargin=.4in]
\item[{\em \small (C$_1$)}] $c_i$ is a quantifier,
\item[{\em \small (C$_2$)}] $c_i c_j x = c_j c_i x$,
\item[{\em \small (C$_3$)}] $d_{i,j}=d_{j,i}$ and $d_{i,i}=1$,
\item[{\em \small (C$_4$)}] if $j\neq i,k$ then $d_{i,k}=c_j(d_{i,j}\wedge d_{j,k})$,
\end{itemize}
This is an $I$-dimensional cylindric \ts{ol} if it additionally satisfies
\begin{itemize}[leftmargin=.4in]
\item[{\em \small (C$_5$)}] if $i\neq j$ then $c_i(d_{i,j}\wedge x)\wedge c_i(d_{i,j}\wedge x^\perp) = 0$.
\end{itemize}
Quantum weak cylindric algebras are those where the underlying \ts{ol} is an \ts{oml}. 
\end{defn}

The motivating example of a classical cylindric algebra is a cylindric set algebra described in Example~\ref{ex:cylindric set algebra}. In Section~\ref{section:logic} we discuss a quantum analog of a cylindric set algebra that informs our choice of axioms. We briefly discuss the axioms below. 

\begin{remark}\label{rem:substitution} {\em 
The purpose of axiom {\small (C$_5$)} in the classical setting, at least in part, is to provide that the substitution operators $S_j^{\, i}x = c_i(d_{i,j}\wedge x)$ of Definition~\ref{defn:substitution} are Boolean endomorphisms. Indeed, {\small (C$_5$)} says that $S_j^{\, i} x \wedge S_j^{\, i}x^\perp = 0$. Since $c_i$ and the operation $d_{i,j}\wedge (\,\cdot\,)$ preserve joins in the classical setting, $S_j^{\, i}$ preserves joins, so {\small (C$_5$)} provides that it also preserves complements, and therefore is a Boolean endomorphism. As we will see, {\small (C$_5$)} does not hold in our quantum analog of cylindric set algebras. But even if it did, the utility would be limited --- in an \ts{oml} the operation $d_{i,j}\wedge(\,\cdot\,)$ does not preserve joins, and knowing that $x,y$ are complements does not provide that they are orthocomplements. More problematic is that in primary examples of \ts{oml}s, projections of a Hilbert space, there are often limited supplies of endomorphisms. The story here is not settled, but we remark that the operation $S_j^{\, i} x = c_i(d_{i,j}\wedge(d_{i,j}^\perp\vee x))$ reduces to the standard substitution operator in the classical case, and since quantifiers and Sasaki products are residuated, this operation preserves joins. 
}
\end{remark}

The following section, Section~\ref{section:logic}, gives in detail a quantum version of a cylindric set algebra. We conclude this section with a discussion of a different sort of example. Here, the situation is not fully settled, but in the simplest setting there is a link between quantum cylindric algebras and what are called commuting squares of subfactors. For this discussion, we need some additional background on von Neumann algebras. 

\begin{defn}
An element $x$ in a von Neumann algebra $\mc{M}$ is positive if $x=a^*a$ for some $a\in\mc{M}$. There is a partial ordering $\leq_{sa}$ on the self-adjoint elements given by $x\leq y$ iff $y-x$ is positive. The positive elements of $\mc{M}$ lying in the interval $[0,1]$ of $\mc{M}$ are called effects. 
\end{defn}

Trivially every projection is positive, and since $1-p$ is the orthocomplementary projection to $p$, every projection is an effect. The partial ordering $\leq_{sa}$ when restricted to projections is the usual ordering of projections. We will customarily write $\leq_{sa}$ simply as $\leq$ since it will not cause confusion. The key to finer properties relating to the ordering of effects and projections is what is known as the {\em range projection} $P(x)$. The following is well-known, and is found for example in \cite[Lemma~3.1]{CH02} and \cite[p.~56]{Westerbaan18}.

\begin{prop}
If $x$ is an effect of $\mc{B}(\mc{H})$, then there is a least projection $P(x)$ of $\mc{H}$ with $x\leq P(x)$. This $P(x)$ is given by $P(x)={\displaystyle \lim_{n\to\infty}x^{1/n}}$ and belongs to the von Neumann subalgebra of $\mc{B}(\mc{H})$ that is generated by $x$. 
\end{prop}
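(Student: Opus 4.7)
My plan is to establish all three assertions---existence of the least projection $P(x)$ above $x$, the formula $P(x) = \lim x^{1/n}$, and membership of $P(x)$ in the von Neumann algebra generated by $x$---together via the Borel functional calculus applied to $x$. Since $x$ is an effect, its spectrum lies in $[0,1]$, and on $[0,1]$ the continuous functions $f_n(t)=t^{1/n}$ form an increasing, uniformly bounded sequence converging pointwise to the characteristic function $\chi_{(0,1]}$.

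First I would define $P = \chi_{(0,1]}(x)$ by Borel functional calculus. Since $\chi_{(0,1]}^2 = \chi_{(0,1]}$, $P$ is a projection. Bounded monotone convergence in the strong operator topology, applied to the spectral integral of $(f_n)$, gives $x^{1/n} \uparrow P$ in SOT. Each $x^{1/n}$ lies in the $C^*$-algebra generated by $x$ and $1$ by continuous functional calculus, so the SOT-limit $P$ lies in the von Neumann algebra generated by $x$, which is the SOT-closure of $C^*(x,1)$.

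Next I would verify the order-theoretic claims. For $x \leq P$: the function $\chi_{(0,1]}(t) - t$ is nonnegative on $[0,1]$ (equal to $1-t$ on $(0,1]$ and to $0$ at $0$), so $P - x \geq 0$ by functional calculus. To show $P$ is the least projection above $x$, suppose $Q$ is a projection with $x \leq Q$. Compressing gives $0 \leq Q^\perp x Q^\perp \leq Q^\perp Q Q^\perp = 0$, whence $\|x^{1/2} Q^\perp v\|^2 = \langle Q^\perp x Q^\perp v, v\rangle = 0$ for every $v$, so $x^{1/2} Q^\perp = 0$ and hence $Q^\perp \mc{H} \subseteq \ker x$. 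Since $t^{1/n}$ vanishes on $[0,1]$ only at $0$, the kernels of $x$ and $x^{1/n}$ coincide, so $x^{1/n} Q^\perp = 0$, i.e.\ $x^{1/n} \leq Q$. Passing to the SOT-limit yields $P \leq Q$.

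The main technical subtlety is the interaction of the SOT-limit with the discontinuity of $\chi_{(0,1]}$ at $0$; this is where Borel (rather than merely continuous) functional calculus is needed, together with the appropriate bounded monotone convergence lemma for spectral integrals. Once this is set up, the remaining steps are routine manipulations with the order and functional calculus.
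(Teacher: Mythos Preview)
The paper does not actually prove this proposition; it simply states the result as well-known and cites \cite[Lemma~3.1]{CH02} and \cite[p.~56]{Westerbaan18}. So there is no ``paper's own proof'' to compare against.

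Your argument is correct and is essentially the standard one found in the references the paper cites. A couple of minor points worth making explicit if you write this up: in the minimality step, after obtaining $x^{1/n}Q^\perp=0$ you should note that $x^{1/n}$ is self-adjoint, so also $Q^\perp x^{1/n}=0$, whence $x^{1/n}=Qx^{1/n}Q\leq Q\cdot 1\cdot Q=Q$ (using $0\leq x^{1/n}\leq 1$); and the passage from $x^{1/n}\leq Q$ for all $n$ to $P\leq Q$ uses that SOT convergence implies WOT convergence, which preserves the order. Both are routine, and your outline already has the essential content.
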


If $x$ is an effect of $\mc{B}(\mc{H})$, then $P(x)$ is the least projection of $\mc{H}$ above $x$. If $\mc{M}$ is a von Neumann subalgebra of $\mc{B}(\mc{H})$, as we noted in Example~\ref{example:vN}, there is a least projection in $\mc{M}$ above the projection $P(x)$, hence a least projection in $\mc{M}$ above $x$.

\begin{defn}
If $x$ is an effect of $\mc{B}(\mc{H})$ and $\mc{M}$ is a von Neumann subalgebra of~$\mc{B}(\mc{H})$, let $\exists_\mc{M}x$ be the least projection in $\mc{M}$ above $x$. 
\end{defn}

Note that $\exists_\mc{M}x = \exists_\mc{M}P(x)$ and that if $x\in\mc{M}$, then $\exists_\mc{M}x=P(x)$ since $P(x)$ belongs to the von Neumann subalgebra generated by $x$ and hence belongs to $\mc{M}$. If we restrict $\exists_\mc{M}$ to apply to projections of $\mc{H}$, then we obtain the quantum monadic algebra of Example~\ref{example:vN}. This extends further. If $\mc{M}$ is a von Neumann subalgebra of $\mc{B}(\mc{H})$ and $\mc{N}\leq\mc{M}$ is a von Neumann subalgebra, then consider the restriction of $\exists_\mc{N}$ to $\mc{M}$. If this $\exists_\mc{N}$ is further restricted to the projections of $\mc{M}$ we obtain the situation in Example~\ref{ex:subfactor}. We will not distinguish between these various restrictions, and leave it to context. 

\begin{defn}
If $\mc{N}$ is a von Neumann subalgebra of $\mc{M}$, a positive linear map $E:\mc{M}\to\mc{N}$ is a conditional expectation if $E(1)=1$ and $E(nmn') = nE(m)n'$ for all $n,n'\in\mc{N}$ and $m\in\mc{M}$. 
\end{defn}

For background on conditional expectations see  \cite[p.~147]{Kadison04}, \cite[Sec.~9]{AP10}, \cite[p.~32.]{Speicher16}, and \cite{JS97,KR97b}. Note that the defining conditions of a conditional expectation imply that $En=n$ for each $n\in\mc{N}$, so a conditional expectation is idempotent. The following is well-known and found for example in \cite[Thm.~7, Prop.~15]{Kadison04}. 

\begin{thm}
If $\mc{M}$ is a von Neumann algebra with faithful tracial state $\tau$ and $\mc{N}$ is a von Neumann subalgebra of $\mc{M}$, then there is a unique conditional expectation $E_\mc{N}:\mc{M}\to\mc{N}$ with $\tau E_\mc{N}=\tau$. 
\end{thm}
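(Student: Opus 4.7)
The plan is to build $E_\mc{N}$ via the GNS representation attached to $\tau$, then derive uniqueness from the trace property. First, form the GNS Hilbert space $L^2(\mc{M},\tau)$ obtained by completing $\mc{M}$ with respect to the inner product $\langle x,y\rangle_\tau = \tau(y^*x)$; write $\hat{m}$ for the vector associated to $m\in\mc{M}$. Faithfulness of $\tau$ gives a faithful representation $\pi:\mc{M}\to B(L^2(\mc{M},\tau))$ by $\pi(m)\hat{x}=\widehat{mx}$. The subalgebra $\mc{N}$ gives a closed subspace $L^2(\mc{N},\tau)$, and we let $e_\mc{N}$ be the orthogonal projection onto it.

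The tracial property now enters crucially. Because $\tau$ is a trace, the map $J\hat{m}=\widehat{m^*}$ extends to a conjugate-linear isometric involution on $L^2(\mc{M},\tau)$, and the formula $\rho(m)\hat{x}=\widehat{xm}$ defines a second (right) representation of $\mc{M}^{op}$ commuting with $\pi(\mc{M})$. Since $e_\mc{N}$ is the orthogonal projection onto the closure of $\mc{N}\cdot\widehat{1}$, it commutes with the right action of $\mc{N}$. For any $m\in\mc{M}$, the operator $T_m := e_\mc{N}\,\pi(m)\,e_\mc{N}$ then commutes with $\rho(\mc{N})$ on $L^2(\mc{N},\tau)$, so by the standard commutant argument (applied inside the algebra obtained by compressing to $L^2(\mc{N},\tau)$) it acts as left multiplication by a unique element $E_\mc{N}(m)\in\mc{N}$, characterized by $\widehat{E_\mc{N}(m)} = e_\mc{N}\hat{m}$. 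The main technical obstacle is precisely this step: one must check that $e_\mc{N}\hat{m}$ represents a bounded left multiplier on $L^2(\mc{N},\tau)$ and lies in $\mc{N}$; the trace and the commuting right $\mc{N}$-action are what make this work.

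From this definition, the conditional expectation properties are routine. Positivity follows because $T_{a^*a}$ is positive on $L^2(\mc{N},\tau)$; unitality from $e_\mc{N}\widehat{1}=\widehat{1}$; and bimodularity $E_\mc{N}(n m n') = n E_\mc{N}(m) n'$ follows from $\pi(n)e_\mc{N} = e_\mc{N}\pi(n)$ for $n\in\mc{N}$ together with the defining characterization. Trace-preservation $\tau(E_\mc{N}(m)) = \tau(m)$ comes from the identity $\tau(x) = \langle \hat{x},\widehat{1}\rangle_\tau$ and the fact that $\widehat{1}\in L^2(\mc{N},\tau)$, so $\langle e_\mc{N}\hat{m},\widehat{1}\rangle_\tau = \langle \hat{m},\widehat{1}\rangle_\tau$.

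For uniqueness, suppose $E'$ is another conditional expectation onto $\mc{N}$ with $\tau\circ E' = \tau$. For any $m\in\mc{M}$ and $n\in\mc{N}$, bimodularity and trace-preservation yield
\[\tau(E'(m)\,n) \;=\; \tau(E'(mn)) \;=\; \tau(mn) \;=\; \tau(E_\mc{N}(m)\,n).\]
Setting $d = E'(m) - E_\mc{N}(m) \in \mc{N}$ and $n = d^*$ gives $\tau(dd^*) = 0$, and faithfulness of $\tau$ forces $d=0$. The critical ingredient throughout is that $\tau$ is both tracial (to produce the right $\mc{N}$-action and make the existence argument go through) and faithful (to conclude uniqueness).
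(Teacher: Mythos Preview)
The paper does not actually prove this theorem; it is stated as background and attributed to \cite[Thm.~7, Prop.~15]{Kadison04}. So there is no in-paper argument to compare against.

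Your sketch is the standard GNS/Jones-projection construction and is essentially correct. The uniqueness paragraph is complete and clean. For existence you correctly isolate the one nontrivial step: showing that the compression $e_\mc{N}\,\pi(m)\,e_\mc{N}$, viewed on $L^2(\mc{N},\tau)$, lies in $\pi(\mc{N})$. The ingredients you cite are the right ones --- the tracial $J$ gives a right action $\rho$, the subspace $L^2(\mc{N},\tau)$ is a $\pi(\mc{N})$--$\rho(\mc{N})$ bimodule so $e_\mc{N}$ commutes with both, and then the commutant identity $\rho(\mc{N})' \cap B(L^2(\mc{N},\tau)) = \pi(\mc{N})$ (standard form for $\mc{N}$ on its own $L^2$-space) finishes it. You flag this step as the ``main technical obstacle'' without quite carrying it out; if you want the proof to stand on its own, spell out that last commutant identity explicitly rather than alluding to ``the standard commutant argument.'' Everything else (positivity, unitality, bimodularity, trace-preservation) follows as you say.
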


The following is result of Pimsner and Popa \cite[Prop.~2.1]{PP86} is of importance (see also \cite[p.~8]{Naaijkens17}). To state this result, we mention the famous Jones index \cite{JS97}. While this provides insight to those familiar with this notion, a knowledge of this index is not important for purposes here, it is enough to know that in the our setting this index will be a real number with value at least 1. 

\begin{thm}\label{thm:index}
Let $\mc{M}$ be a type II$_1$ factor, $\mc{N}\leq\mc{M}$ be a subfactor of $\mc{M}$ of finite index $k=[\mc{M}:\mc{N}]$, and $E_\mc{N}$ be the unique conditional expectation compatible with the trace of $\mc{M}$. Then $\frac{1}{k}x\leq E_\mc{N}\,x$ for all positive $x\in\mc{M}$. 
\end{thm}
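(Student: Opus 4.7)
The plan is to apply the Jones basic construction to the finite-index subfactor $\mc{N}\le\mc{M}$. Form $\mc{M}_1 = \langle \mc{M},e\rangle \subseteq B(L^2(\mc{M},\tau))$, where $e = e_\mc{N}$ is the orthogonal projection onto $\overline{\mc{N}}\subseteq L^2(\mc{M},\tau)$. I would invoke from subfactor theory the following standard facts: $\mc{M}_1$ is again a type II$_1$ factor with faithful trace $\tau_1$ extending $\tau$; the identity $exe = E_\mc{N}(x)\,e$ holds for every $x\in\mc{M}$; the finite-index hypothesis is equivalent to the Markov property $\tau_1(me) = \lambda\,\tau(m)$ for all $m\in\mc{M}$, where $\lambda := k^{-1}$; and, by the theorem just stated applied to $\mc{M}\subseteq\mc{M}_1$, there is a unique trace-preserving conditional expectation $E_\mc{M}:\mc{M}_1\to\mc{M}$, which is positive and an $\mc{M}$-bimodule map.

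First I would extract the scalar identity $E_\mc{M}(e) = \lambda\cdot 1$: for any $y\in\mc{M}$ the Markov property gives $\tau\bigl(y\,E_\mc{M}(e)\bigr) = \tau_1(ye) = \lambda\,\tau(y)$, and faithfulness of $\tau$ on $\mc{M}$ forces the identity. The decisive step is then short. Fix $x \ge 0$ in $\mc{M}$ and set $a := 1 - ke \in \mc{M}_1$, which is self-adjoint since $e$ is. Then $axa \ge 0$, and positivity of $E_\mc{M}$ yields $E_\mc{M}(axa)\ge 0$. Expanding and using $exe = E_\mc{N}(x)\,e$,
\[
axa \;=\; x - k(ex+xe) + k^{2}\,E_\mc{N}(x)\,e.
\]
Applying $E_\mc{M}$ term by term via the bimodule property together with $E_\mc{M}(e) = \lambda$ gives $E_\mc{M}(ex) = E_\mc{M}(xe) = \lambda\,x$ and $E_\mc{M}\bigl(E_\mc{N}(x)\,e\bigr) = \lambda\,E_\mc{N}(x)$, so using $k\lambda = 1$,
\[
0 \;\le\; E_\mc{M}(axa) \;=\; x - 2k\lambda\,x + k^{2}\lambda\,E_\mc{N}(x) \;=\; -x + k\,E_\mc{N}(x).
\]
Rearranging yields the stated inequality $E_\mc{N}(x) \ge k^{-1}\,x$.

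The main obstacle is not the algebra, which is a short computation, but importing the preliminaries: the existence and trace-normalization of the basic construction, and in particular the Markov property of the Jones projection for a finite-index subfactor. These are structural outputs of Jones' index theory rather than consequences of anything developed earlier in the present paper. Once they are in hand, the key algebraic insight is to test positivity of $E_\mc{M}$ against the one-parameter family $(1-\mu e)\,x\,(1-\mu e)$ and to observe that $\mu = k$ is precisely the value at which the $x$- and $E_\mc{N}(x)$-coefficients combine to give the sharp bound $E_\mc{N}(x)\ge \lambda\,x$ rather than a weaker inequality.
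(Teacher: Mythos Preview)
The paper does not give its own proof of this statement: it is quoted as a result of Pimsner and Popa \cite[Prop.~2.1]{PP86} (with a secondary reference to \cite{Naaijkens17}) and used as a black box in the subsequent Proposition~\ref{prop:quantifier-expectation}. So there is no in-paper argument to compare against.

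Your argument is correct and is essentially the standard proof of the Pimsner--Popa inequality via the basic construction. The computation is clean: with $a=1-ke$ one has $axa\ge 0$, and applying the $\mc{M}$-bimodule trace-preserving expectation $E_\mc{M}:\mc{M}_1\to\mc{M}$ together with $E_\mc{M}(e)=k^{-1}$ collapses this to $-x+kE_\mc{N}(x)\ge 0$. You are also right to flag that the substantive content lies in the imported preliminaries (existence of the II$_1$ basic construction, the identity $exe=E_\mc{N}(x)e$, and the Markov/trace property $\tau_1(me)=k^{-1}\tau(m)$), none of which are developed in this paper; they are exactly the structural facts that Pimsner--Popa rely on. One small cosmetic point: you invoke ``the theorem just stated applied to $\mc{M}\subseteq\mc{M}_1$'' to get $E_\mc{M}$, but in the paper's numbering that is the preceding existence theorem for trace-preserving conditional expectations, not Theorem~\ref{thm:index} itself, so the phrasing could be tightened to avoid an apparent circularity.
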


This setting of a type II$_1$ factor and a subfactor of finite index is the important setting for much of Jones' work related to planar algebras and knots \cite{JS97}. In this setting, we relate conditional expectations to quantifiers in the following. 

\begin{prop} \label{prop:quantifier-expectation}
If $\mc{M}$ is a type II$_1$ factor and $\mc{N}\leq\mc{M}$ is a subfactor of finite index, then $\exists_\mc{N}\,p=\exists_\mc{N} E_\mc{N}\, p = P(E_\mc{N}\,p)$ for each projection $p$ of $\mc{M}$. 
\end{prop}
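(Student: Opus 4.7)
The plan is to prove the chain $\exists_\mc{N}\,p = \exists_\mc{N} E_\mc{N}\,p = P(E_\mc{N}\,p)$ by handling the two equalities separately. The rightmost equality is essentially immediate from the observation recorded just after the definition of $\exists_\mc{M}$: since $E_\mc{N}\,p\in\mc{N}$, the range projection $P(E_\mc{N}\,p)$ lies in the von Neumann subalgebra generated by $E_\mc{N}\,p$, hence in $\mc{N}$; being the least projection of $\mc{B}(\mc{H})$ above $E_\mc{N}\,p$, it is a fortiori the least projection in $\mc{N}$ above it.

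For the leftmost equality I would prove two inequalities. The direction $\exists_\mc{N} E_\mc{N}\,p \leq \exists_\mc{N}\,p$ uses only that $E_\mc{N}$ is positive and restricts to the identity on $\mc{N}$: from $p \leq \exists_\mc{N}\,p$ I apply $E_\mc{N}$ to obtain $E_\mc{N}\,p \leq E_\mc{N}(\exists_\mc{N}\,p) = \exists_\mc{N}\,p$, so $\exists_\mc{N}\,p$ is itself a projection in $\mc{N}$ above $E_\mc{N}\,p$, and therefore dominates the least such.

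The reverse inequality $\exists_\mc{N}\,p \leq \exists_\mc{N} E_\mc{N}\,p$ is where the subfactor hypothesis is essential, and it is the main content of the proof. Theorem~\ref{thm:index}, applied to the positive element $p$, yields $\tfrac{1}{k}p \leq E_\mc{N}\,p \leq P(E_\mc{N}\,p)$. Because $p$ is a projection it is its own range projection, and the range projection of $\tfrac{1}{k}p$ coincides with that of $p$, so the universal property of $P(\,\cdot\,)$ as the least projection dominating a positive element forces $p \leq P(E_\mc{N}\,p)$. Passing to the least projection in $\mc{N}$ above $p$ then gives $\exists_\mc{N}\,p \leq P(E_\mc{N}\,p) = \exists_\mc{N} E_\mc{N}\,p$, which combined with the previous inequality yields the required equality. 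The step I expect to articulate most carefully is the passage from $\tfrac{1}{k}p \leq P(E_\mc{N}\,p)$ to $p \leq P(E_\mc{N}\,p)$; this is where the positive lower bound provided by the finite index is converted into a projection-level comparison, and it is only at this point that \emph{some} strictly positive lower bound, rather than any specific one, is needed.
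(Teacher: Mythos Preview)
Your proposal is correct and follows essentially the same approach as the paper: both use Theorem~\ref{thm:index} to get $\tfrac{1}{k}p \leq E_\mc{N}\,p$, pass to range projections to obtain $p \leq P(E_\mc{N}\,p) \in \mc{N}$ and hence $\exists_\mc{N}\,p \leq \exists_\mc{N} E_\mc{N}\,p$, and use positivity of $E_\mc{N}$ together with $E_\mc{N}|_\mc{N}=\mathrm{id}$ for the reverse inequality. The only cosmetic difference is that the paper justifies $p = P(\tfrac{1}{k}p) \leq P(E_\mc{N}\,p)$ via the formula $P(x)=\lim_n x^{1/n}$ (and operator monotonicity of $t\mapsto t^{1/n}$), whereas you invoke the universal property of the range projection directly; your phrasing is arguably cleaner.
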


\begin{proof}
Note first that $E_\mc{N}\,p$ is an effect since $0\leq p\leq 1$ and the conditional expectation $E_\mc{N}$ is positive, hence order preserving, and $E_\mc{N}\,1=1$. By Theorem~\ref{thm:index} there is a positive $\lambda\leq 1$ with $\lambda p \leq E_\mc{N}\,p$. Since $P(x)={\displaystyle \lim_{n\to\infty}x^{1/n}}$ for each effect $x$, it follows that $p=P(\lambda p)\leq P(E_\mc{N}\,p)$. But $E_\mc{N}\,p\in\mc{N}$ implies that the least projection $P(E_\mc{N}\,p)$ above it belongs to $\mc{N}$, and therefore $P(E_\mc{N}\,p)$ is the least projection in $\mc{N}$ above $E_\mc{N}\,p$. This gives $p\leq P(E_\mc{N}\,p)=\exists_\mc{N}E_\mc{N}\,p$. 
Then as $\exists_\mc{N}E_\mc{N}\,p$ is a projection in $\mc{N}$ above $p$, we have $\exists_\mc{N}\,p\leq \exists_\mc{N}E_\mc{N}\,p$. For the other inequality, set $q=\exists_\mc{N}\,p$. Then $p\leq q$ gives $\exists_\mc{N}E_\mc{N}\,p\leq\exists_\mc{N}E_\mc{N}\,q=\exists_\mc{N}\,q$ since $E_\mc{N}$ is the identity on $\mc{N}$. But $\exists_\mc{N}\,q=q$ since $q$ is a projection in $\mc{N}$. Then $q=\exists_\mc{N}\,p$  gives $\exists_\mc{N}E_\mc{N}\,p\leq\exists_\mc{N}\,p$. 
\end{proof}

\begin{prop}\label{prop:commuting quantifiers}
If $\mc{L}$ is a type II$_1$ factor and $\mc{M},\mc{N}$ are subfactors of $\mc{L}$ of finite index such that the conditional expectations $E_\mc{M}:\mc{L}\to\mc{M}$ and $E_\mc{N}:\mc{L}\to\mc{N}$ commute, then the quantifiers $\exists_\mc{M}:P(\mc{L})\to P(\mc{M})$ and $\exists_\mc{N}:P(\mc{L})\to P(\mc{N})$ commute.
\end{prop}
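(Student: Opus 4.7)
The plan is to establish the intermediate identity
\[\exists_\mc{M}\exists_\mc{N}\,p \;=\; P(E_\mc{M}\,E_\mc{N}\,p)\]
for every projection $p\in\mc{L}$, from which the conclusion is immediate: the analogous identity with $\mc{M}$ and $\mc{N}$ swapped, combined with the hypothesis $E_\mc{M}\,E_\mc{N}=E_\mc{N}\,E_\mc{M}$, forces the two iterated quantifiers to coincide with the same range projection. The two main tools I would use are Proposition \ref{prop:quantifier-expectation} and the Pimsner-Popa bound (Theorem \ref{thm:index}).

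For the inequality $\exists_\mc{M}\exists_\mc{N}\,p\leq P(E_\mc{M}\,E_\mc{N}\,p)$, first observe that $E_\mc{M}\,E_\mc{N}\,p$ lies in $\mc{M}$, so its range projection lies in the von Neumann subalgebra it generates, hence in $\mc{M}$. Applying Theorem \ref{thm:index} to the inclusion $\mc{M}\leq\mc{L}$ and the positive element $E_\mc{N}\,p\in\mc{L}$ yields $\tfrac{1}{k}E_\mc{N}\,p\leq E_\mc{M}\,E_\mc{N}\,p$ for $k=[\mc{L}:\mc{M}]$. Since the range projection is monotone on effects and invariant under positive scaling (both clear from $P(x)=\lim_n x^{1/n}$), this passes to $\exists_\mc{N}\,p=P(E_\mc{N}\,p)\leq P(E_\mc{M}\,E_\mc{N}\,p)$. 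Thus $P(E_\mc{M}\,E_\mc{N}\,p)$ is a projection in $\mc{M}$ above $\exists_\mc{N}\,p$, so it dominates the least such projection, namely $\exists_\mc{M}\exists_\mc{N}\,p$.

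For the reverse inequality, set $q=\exists_\mc{M}\exists_\mc{N}\,p$. Proposition \ref{prop:quantifier-expectation} gives $q\geq\exists_\mc{N}\,p=P(E_\mc{N}\,p)\geq E_\mc{N}\,p$. Applying the positive map $E_\mc{M}$, which fixes $q\in\mc{M}$, yields $q=E_\mc{M}\,q\geq E_\mc{M}\,E_\mc{N}\,p$, and since $q$ is a projection this forces $q\geq P(E_\mc{M}\,E_\mc{N}\,p)$.

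The main obstacle is really bookkeeping---ensuring the range projection interacts cleanly with both the Pimsner-Popa bound and the positivity of $E_\mc{M}$. The structural content is essentially packaged into Proposition \ref{prop:quantifier-expectation}, which already identifies $\exists$ with $P\circ E$ on single projections; the present result iterates that identification and invokes commutativity of the conditional expectations only at the very last step.
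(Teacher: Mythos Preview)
Your proof is correct and takes a genuinely different route from the paper's. The paper first isolates a purely order-theoretic criterion: $\exists_\mc{M}$ and $\exists_\mc{N}$ commute iff $\exists_\mc{N}$ carries $P(\mc{M})$ into $P(\mc{M})\cap P(\mc{N})$ and symmetrically. It then verifies this by observing that for $q\in P(\mc{M})$ the commutativity of expectations gives $E_\mc{N}\,q=E_\mc{N}E_\mc{M}\,q=E_\mc{M}E_\mc{N}\,q\in\mc{M}$, so $\exists_\mc{N}\,q=P(E_\mc{N}\,q)\in\mc{M}$. You instead establish the formula $\exists_\mc{M}\exists_\mc{N}\,p=P(E_\mc{M}E_\mc{N}\,p)$ for \emph{all} $p\in P(\mc{L})$, and your two inequalities nowhere use the commutativity hypothesis---it enters only in the final line. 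This buys you something extra: a genuine iteration of Proposition~\ref{prop:quantifier-expectation} that holds for any pair of finite-index subfactors, commuting or not. The paper's approach, on the other hand, separates out a clean lattice-theoretic characterization of commuting quantifiers that is independent of the operator-algebraic machinery and could be reused in other contexts.
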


\begin{proof}
We first note that $\exists_\mc{M}$ and $\exists_\mc{N}$ commute, iff (1) $q\in P(\mc{M})$ implies $\exists_\mc{N}\,q\in\mc{M}\cap\mc{N}$ and (2) $r\in P(\mc{N})$ implies $\exists_\mc{M}\,r\in\mc{M}\cap\mc{N}$. Indeed, if the quantifiers commute and $q\in P(\mc{M})$, then $\exists_\mc{N}\,q=\exists_\mc{N}\exists_\mc{M}\,q = \exists_\mc{M}\exists_\mc{N}\,q$ hence belongs to both $\mc{M}$ and $\mc{N}$. This gives (1) and (2) is similar. Conversely, suppose (1) and (2) hold. Then for $p\in P(\mc{L})$, setting $q=\exists_\mc{M}\,p$ we have $\exists_\mc{N}\,q$ is a projection in $\mc{M}\cap\mc{N}$ that lies above $p$ and hence lies above $\exists_\mc{N}\,p$ and therefore also above $\exists_\mc{M}\exists_\mc{N}\,p$. So $\exists_\mc{M}\exists_\mc{N}\,p\leq\exists_\mc{N}\exists_\mc{M}\,p$ and the other inequality follows from using condition (2) instead of (1). Assume the conditional expectations $E_\mc{M}$ and $E_\mc{N}$ commute and $q\in P(\mc{M})$. By Proposition~\ref{prop:quantifier-expectation}, $\exists_\mc{N}\,q = P(E_\mc{N}\,q)$. Since $q\in\mc{M}$ we have $q=E_\mc{M}\,q$, and then as conditional expectations commute $E_\mc{N}\,q = E_\mc{N}E_\mc{M}\,q=E_\mc{M}E_\mc{N}\,q$, giving that $E_\mc{N}\,q\in\mc{M}$. So $P(E_\mc{M}\,q)\in\mc{M}$, showing that $\exists_\mc{N}\exists_\mc{M}\,q\in\mc{M}$. This establishes (1) and (2) is similar. So $\exists_\mc{M}$ and $\exists_\mc{N}$ commute. 
\end{proof}

\begin{example} {\em 
Originating with a non-commutative version of independent $\sigma$-fields \cite{Popa83,WW95}, the notion of a {\em commuting square} of subfactors plays an important role in their theory \cite[p.~188]{GPJ89}. Suppose $\mc{L}$ is a type II$_1$ factor with a finite normal tracial state $\tau$ and $\mc{M}$ and $\mc{N}$ subfactors of $\mc{L}$. If $\mc{M}\cap\mc{N}=\mc{K}$, then $\mc{K},\mc{M},\mc{N},\mc{L}$ is a commuting square of subfactors if the associated conditional expectations $E_\mc{M}:\mc{L}\to\mc{M}$ and $E_\mc{N}:\mc{L}\to\mc{N}$ commute. 
\begin{center}
\begin{tikzpicture}[scale=1]
\node at (0,0) {$\mc{K}$}; \node at (2,0) {$\mc{N}$}; \node at (0,1.4) {$\mc{M}$}; \node at (2,1.4) {$\mc{L}$}; 
\node at (1,0) {$\subseteq$}; \node at (1,1.3) {$\subseteq$};
\node at (0,.65) {\rotatebox[origin=c]{90}{$\subseteq$}}; \node at (2,.65) {\rotatebox[origin=c]{90}{$\subseteq$}};
\end{tikzpicture}
\end{center}
Such a commuting square gives a an \ts{oml} $P(\mc{L})$ with two quantifiers, and by Proposition~\ref{prop:commuting quantifiers} these quantifiers commute. Thus $(P(\mc{L}),\exists_\mc{M},\exists_\mc{N})$ is a 2-dimensional diagonal-free quantum cylindric algebra. 
}
\end{example}

%
%

\section{Quantum cylindric set algebras and quantum predicate calculus}\label{section:logic}

The motivating example of a cylindric algebra is that of a cylindric set algebra. This is the powerset of a product $\prod_IX_i$ of sets with cylindrification $c_i \,S$ being all choice functions $\alpha$ that agree with some member of $S$ except possibly in the $i^{th}$-coordinate. If all factors of this product are equal, so we have a power $X^I$, diagonals are defined as $d_{i,j} =\{\alpha:\alpha(i)=\alpha(j)\}$. We provide a quantum analog using tensor products and powers rather than Cartesian product. Our treatment shares features with Weaver's approach to quantum predicate logic \cite{Weaver01}. 

\begin{defn}
For a Hilbert space $\mc{H}$ let $\mc{C}(\mc{H})$ be its \ts{oml} of closed subspaces. 
\end{defn}

While our eventual purpose is to work in a tensor product $\mc{H}_1\otimes\cdots\otimes\mc{H}_n$, or even an infinite such tensor product, our considerations involve the relation of one or two factors to the whole. So it is more convenient to work with a binary or ternary tensor product and use associativity. To this end, for $A, B$ closed subspaces of $\mc{H}$ and $\mc{K}$, respectively, note that $A\otimes B$ is the closed subspace of $\mc{H}\otimes\mc{K}$ generated by all $a\otimes b$ where $a\in A$ and $b\in B$, and we use $\mc{H}\otimes\mc{C}(\mc{K})$ for the collection of all closed subspaces of $\mc{H}\otimes\mc{K}$ of the form $\mc{H}\otimes B$ where $B\in\mc{C}(\mc{K})$. 

\begin{prop}\label{prop:iso}
The map $\alpha:\mc{C}(\mc{K})\to\mc{C}(\mc{H}\otimes \mc{K})$ where $\alpha(B)=\mc{H}\otimes B$ is a complete \ts{ol}-embedding and its image is $\mc{H}\otimes\mc{C}(\mc{K})$. 
\end{prop}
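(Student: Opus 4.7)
The plan is to reformulate $\alpha$ at the level of projection operators. A closed subspace $B\subseteq\mc{K}$ corresponds to an orthogonal projection $P_B\in P(\mc{K})$, and $\mc{H}\otimes B$ is the range of the projection $I_\mc{H}\otimes P_B$ on $\mc{H}\otimes\mc{K}$. Under this identification, $\alpha$ becomes the lift of $P\mapsto I_\mc{H}\otimes P$ to closed subspaces, and the claim reduces to verifying injectivity together with preservation of orthocomplements, arbitrary joins, and arbitrary meets. The image is $\mc{H}\otimes\mc{C}(\mc{K})$ by definition, so that part is immediate.

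First I would handle orthocomplementation: since $I_\mc{H}\otimes P_B + I_\mc{H}\otimes P_{B^\perp}=I_\mc{H}\otimes I_\mc{K}=I_{\mc{H}\otimes\mc{K}}$ and the two summands are orthogonal projections, the range of $I_\mc{H}\otimes P_{B^\perp}$ is the orthocomplement of the range of $I_\mc{H}\otimes P_B$, i.e.\ $\alpha(B^\perp)=\alpha(B)^\perp$. Next, for joins, monotonicity yields $\bigvee_i\alpha(B_i)\subseteq\alpha(\bigvee_iB_i)$. For the reverse, a spanning set for $\mc{H}\otimes\bigvee_iB_i$ consists of the pure tensors $a\otimes b$ with $a\in\mc{H}$ and $b\in\bigvee_iB_i$; any such $b$ is a norm-limit of finite linear combinations of vectors from $\bigcup_iB_i$, and continuity of $a\otimes(\,\cdot\,)$ then puts $a\otimes b$ in $\bigvee_i\alpha(B_i)$. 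Meet preservation follows formally from De Morgan using the already-established orthocomplement and join preservation.

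Finally, for injectivity it suffices to show order reflection. If $\alpha(B_1)\subseteq\alpha(B_2)$, fix any nonzero $a\in\mc{H}$; then for each $b\in B_1$ the vector $a\otimes b$ lies in $\mc{H}\otimes B_2$, so $(I_\mc{H}\otimes P_{B_2})(a\otimes b)=a\otimes b$, forcing $a\otimes(P_{B_2}b-b)=0$, and since $a\neq 0$ we get $b=P_{B_2}b\in B_2$.

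The most delicate step is the join argument, where one juggles two layers of closure: within $\mc{K}$ when forming $\bigvee_iB_i$, and within $\mc{H}\otimes\mc{K}$ when forming $\bigvee_i\alpha(B_i)$. An alternative that sidesteps this is to fix an \ts{onb} $(a_k)$ of $\mc{H}$ and use the unique Hilbert-space expansion $x=\sum_k a_k\otimes b_k$ for $x\in\mc{H}\otimes\mc{K}$, together with the easily verified criterion that $x\in\mc{H}\otimes B$ iff every $b_k\in B$; this delivers injectivity, join preservation, meet preservation, and orthocomplement preservation in one uniform stroke, and is how I would write the final proof if the direct argument becomes cumbersome.
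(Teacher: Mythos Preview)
Your proof is correct and follows essentially the same route as the paper: both handle orthocomplements by showing $\mc{H}\otimes B$ and $\mc{H}\otimes B^\perp$ are complementary (you via the projection identity $I_\mc{H}\otimes P_B+I_\mc{H}\otimes P_{B^\perp}=I$, the paper via \ts{onb}s), and both establish join preservation by the identical density argument that approximates $b\in\bigvee_iB_i$ by the linear span of $\bigcup_iB_i$ and tensors with a fixed $a\in\mc{H}$. The only notable difference is in the injectivity step: the paper observes $\alpha(B)=0\Leftrightarrow B=0$ and invokes the general fact that an \ts{oml} homomorphism with trivial kernel is an embedding, whereas you verify order reflection directly; both are standard and equally short.
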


\begin{proof} 
This map is well-defined, order preserving, and onto. Let $B\in\mc{C}(\mc{K})$ and take \ts{onb}'s $(e_i)_I$ of $\mc{H}$, $(f_j)_J$ of $B$ and $(g_k)_K$ of $B^\perp$. Then $(e_i\otimes f_j)_{I\times J}$ is an \ts{onb} of $\mc{H}\otimes B$ and $(e_i\otimes g_k)_{I\times K}$ of $\mc{H}\otimes B^\perp$. Thus $\mc{H}\otimes B$ and $\mc{H}\otimes B^\perp$ are orthocomplements. Suppose $\bigvee_I B_i = B$. As $\alpha$ is order preserving $\bigvee_I \mc{H}\otimes B_i\leq \mc{H}\otimes B$. For $v\in B$ there is a member of the linear span of $\bigcup_I B_i$ arbitrarily close to $v$, so for any $u\in\mc{H}$ there are members of the linear span of $\bigcup_I\mc{H}\otimes B_i$ arbitrarily close to $u\otimes v$. It follows that $\mc{H}\otimes B\leq\bigvee_I\mc{H}\otimes B_i$. This shows that $\alpha$ is a complete \ts{ol}-homomorphism onto $\mc{H}\otimes\mc{C}(\mc{K})$. It is easily seen that $\alpha(B)=0$ iff $B=0$. Since $\alpha$ is a homomorphism between \ts{oml}s, it follows that $\alpha$ is an embedding. 
\end{proof}

Since $\mc{H}\otimes\mc{C}(\mc{K})$ is a complete subalgebra of $\mc{C}(\mc{H}\otimes\mc{K})$ we may define the following. 

\begin{defn}
For $S\in\mc{C}(\mc{H}\otimes\mc{K})$ let 
\begin{eqnarray*}
\exists_{\mc H} S&\mbox{ be the smallest in $\mc{H}\otimes\mc{C}(\mc{K})$ above $S$,}\\
\forall_{\mc H} S&\mbox{ be the largest in $\mc{H}\otimes\mc{C}(\mc{K})$ below $S$.}
\end{eqnarray*}
\end{defn} 

We provide a more explicit description of these operations. 

\begin{prop}
If $S\in\mc{C}(\mc{H}\otimes\mc{K})$, then $\forall_\mc{H}S = \mc{H}\otimes S_\mc{H}$ where 
\[S_\mc{H}=\{w\in\mc{K} : a\otimes w\in S\mbox{ for all }a\in\mc{H}\}.\]
\end{prop}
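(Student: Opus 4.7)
The plan is to verify directly that $\mc{H}\otimes S_\mc{H}$ satisfies the defining property of $\forall_\mc{H} S$, namely that it is the largest element of $\mc{H}\otimes\mc{C}(\mc{K})$ contained in $S$. This breaks into three steps: showing $S_\mc{H}$ is a closed subspace of $\mc{K}$ (so that $\mc{H}\otimes S_\mc{H}$ makes sense as an element of $\mc{H}\otimes\mc{C}(\mc{K})$), showing $\mc{H}\otimes S_\mc{H}\leq S$, and showing maximality.

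First I would observe that for each fixed $a\in\mc{H}$ the map $w\mapsto a\otimes w$ is a bounded linear operator $\mc{K}\to\mc{H}\otimes\mc{K}$, so the set $T_a=\{w\in\mc{K}:a\otimes w\in S\}$ is the preimage of the closed subspace $S$ under a continuous linear map, hence a closed subspace of $\mc{K}$. Since $S_\mc{H}=\bigcap_{a\in\mc{H}} T_a$, it is an intersection of closed subspaces and therefore a closed subspace of $\mc{K}$.

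Next, for the containment $\mc{H}\otimes S_\mc{H}\leq S$, I would note that by definition every elementary tensor $a\otimes w$ with $a\in\mc{H}$ and $w\in S_\mc{H}$ lies in $S$. Since $\mc{H}\otimes S_\mc{H}$ is the closed subspace of $\mc{H}\otimes\mc{K}$ generated by such elementary tensors and $S$ is closed, we get $\mc{H}\otimes S_\mc{H}\leq S$. Combined with Proposition~\ref{prop:iso}, this shows $\mc{H}\otimes S_\mc{H}$ is an element of $\mc{H}\otimes\mc{C}(\mc{K})$ sitting below $S$.

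For maximality, suppose $\mc{H}\otimes B\leq S$ for some $B\in\mc{C}(\mc{K})$. Then for every $w\in B$ and every $a\in\mc{H}$ we have $a\otimes w\in\mc{H}\otimes B\leq S$, so by definition $w\in S_\mc{H}$, giving $B\leq S_\mc{H}$ and hence $\mc{H}\otimes B\leq\mc{H}\otimes S_\mc{H}$. This establishes that $\mc{H}\otimes S_\mc{H}$ is the largest element of $\mc{H}\otimes\mc{C}(\mc{K})$ below $S$, which is $\forall_\mc{H} S$ by definition. The only mild subtlety — and the step most likely to trip one up — is recognizing that the definition of $\mc{H}\otimes S_\mc{H}$ already uses the closed (not algebraic) span of elementary tensors, so the containment in $S$ relies crucially on the closedness of $S$; all remaining verifications are routine.
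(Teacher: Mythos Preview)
Your proof is correct and follows essentially the same approach as the paper's own proof: verify $S_\mc{H}\in\mc{C}(\mc{K})$, check $\mc{H}\otimes S_\mc{H}\leq S$ via elementary tensors, then show maximality by taking an arbitrary $\mc{H}\otimes B\leq S$ and deducing $B\leq S_\mc{H}$. You simply provide more detail on the first step (the paper dispatches it with ``it is easily seen'') by explicitly writing $S_\mc{H}$ as an intersection of preimages under bounded linear maps.
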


\begin{proof}
It is easily seen that $S_\mc{H}\in\mc{C}(\mc{K})$. If $w\in S_\mc{H}$, then $a\otimes w\in S$ for all $a\in\mc{H}$, so a generating set of $\mc{H}\otimes S_\mc{H}$ is contained in $S$, giving $\mc{H}\otimes S_\mc{H}\leq S$. Suppose $T\in\mc{C}(\mc{K})$ and $\mc{H}\otimes T\leq S$. Then for $w\in T$ we have $a\otimes w\in S$ for all $a\in\mc{H}$, so $T\leq S_\mc{H}$. Thus $\mc{H}\otimes S_\mc{H}$ is largest in $\mc{H}\otimes\mc{C}(\mc{K})$ beneath $S$. 
\end{proof}

Of course, $\exists_\mc{H}$ is given by $\perp\forall_\mc{H}\perp$, but we can give an explicit description. For this, note that for an \ts{onb} $(a_i)_I$ of $\mc{H}$, each element $v\in\mc{H}\otimes\mc{K}$ can be uniquely expressed as $\sum_I a_i\otimes v_i$ for some family $(v_i)_I\in\mc{K}$. This family depends on the choice of \ts{onb} $(a_i)_I$. We use $\langle T\rangle$ for the closed subspace generated by a set $T$. 

\begin{defn}
Let $(a_i)_I$ be an \ts{onb} of $\mc{H}$ and $S\subseteq \mc{H}\otimes\mc{K}$. Set
\[S^{\mc H} = \langle v_i : v\in S \mbox{ and } i\in I \rangle.\]
\end{defn}

\begin{lemma}\label{lem:tech}
For $(a_i)_I$ an \ts{onb} of $\mc{H}$ and $S\subseteq \mc{H}\otimes\mc{K}$ we have $\langle S\rangle^{\mc H} = S^{\mc H}$. 
\end{lemma}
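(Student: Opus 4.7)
The inclusion $S^{\mc H}\subseteq\langle S\rangle^{\mc H}$ is immediate: since $S\subseteq\langle S\rangle$, every generator $v_i$ (with $v\in S$) of $S^{\mc H}$ is also a generator of $\langle S\rangle^{\mc H}$, and this inclusion is preserved by taking the closed subspace generated. So the work is in the reverse inclusion $\langle S\rangle^{\mc H}\subseteq S^{\mc H}$, and the natural strategy is to exhibit each ``coordinate projection'' $v\mapsto v_i$ as a bounded linear map and then push through closures.

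First, I would define for each $i\in I$ the map $T_i:\mc{H}\otimes\mc{K}\to\mc{K}$ that sends $v$ to the $i$-th component $v_i$ in the decomposition $v=\sum_I a_i\otimes v_i$. The cleanest way to see that $T_i$ is well-defined and bounded is to fix an \ts{onb} $(b_j)_J$ of $\mc{K}$, use the fact that $(a_i\otimes b_j)_{I\times J}$ is an \ts{onb} of $\mc{H}\otimes\mc{K}$, and observe that $v_i=\sum_j\langle v,a_i\otimes b_j\rangle\,b_j$. Parseval then gives $\|v_i\|^2\leq\sum_{i,j}|\langle v,a_i\otimes b_j\rangle|^2=\|v\|^2$, so $T_i$ is a linear contraction, hence continuous. (Alternatively, one can characterise $v_i$ via Riesz as the unique vector with $\langle v_i,w\rangle=\langle v,a_i\otimes w\rangle$ for all $w\in\mc{K}$ and read boundedness off from that.)

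Next I would verify the inclusion $\langle S\rangle^{\mc H}\subseteq S^{\mc H}$ by a standard density argument. For any finite linear combination $u=\sum_k \lambda_k v^{(k)}$ with $v^{(k)}\in S$, linearity of $T_i$ gives $u_i=\sum_k \lambda_k\,v^{(k)}_i$, which is a finite linear combination of elements of $\{v_i:v\in S,\,i\in I\}$ and so lies in $S^{\mc H}$. Thus $T_i$ maps the algebraic span of $S$ into $S^{\mc H}$. Given $w\in\langle S\rangle$, approximate $w$ by vectors $w^{(n)}$ in this algebraic span; then continuity of $T_i$ yields $w_i=T_i w=\lim_n T_i w^{(n)}$, and since $S^{\mc H}$ is closed, $w_i\in S^{\mc H}$. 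This holds for every $i\in I$, so the generators of $\langle S\rangle^{\mc H}$ lie in $S^{\mc H}$, giving the inclusion.

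The only step that needs any real care is the boundedness of $T_i$, which the $(a_i\otimes b_j)$-expansion handles cleanly; everything after that is a monotone-operator-plus-closure argument. I expect no serious obstacle, but one should be explicit that the $T_i$ are defined independently of the choice of \ts{onb} $(b_j)_J$ of $\mc{K}$ (the characterisation $\langle v_i,w\rangle=\langle v,a_i\otimes w\rangle$ makes this manifest), so that the argument really uses only the fixed \ts{onb} $(a_i)_I$ of $\mc{H}$ from the hypothesis.
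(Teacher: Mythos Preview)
Your proposal is correct and follows essentially the same approach as the paper. The paper carries out the explicit $\epsilon$-argument showing $\|w_{i_0}-v_{i_0}\|^2\leq\|w-v\|^2<\epsilon$ from the orthonormal expansion, which is precisely the contraction property of your $T_i$; your framing of the coordinate projection as a bounded linear operator and invocation of continuity is a slightly more conceptual packaging of the same computation.
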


\begin{proof}
One containment is obvious. For the other, assume $w\in\langle S\rangle$. Then for $\epsilon >0$ there is $v=\lambda_1v^1+\cdots+\lambda_nv^n$ in the linear span of $S$ with $\|w-v\|^2<\epsilon$. Since $(a_i)_I$ is an \ts{onb} and $w-v=\sum_Ia_i\otimes(w_i-v_i)$ 
\[ \sum_I\|a_i\otimes(w_i-v_i)\|^2 = \| \sum_I a_i\otimes(w_i-v_i)\|^2 = \| w-v\|^2 <\epsilon\]
As $\| a_i\otimes(w_i-v_i)\|^2 = \|a_i\|^2\|w_i-v_i\|^2=\|w_i-v_i\|^2$, for any $i_0\in I$ we have 
\[ \|w_{i_0}-(\lambda_1v_{i_0}^1+\cdots+\lambda_nv_{i_0}^n)\|^2 = \| w_{i_0}-v_{i_0}\|^2 < \epsilon \]
So $w_{i_0}\in S^{\mc H}$, and it follows that $\langle S\rangle^{\mc H} \subseteq S^{\mc H}$. 
\end{proof}

\begin{prop}
If $(a_i)_I$ an \ts{onb} of $\mc{H}$ and $S\in \mc{C}(\mc{H}\otimes\mc{K})$, then $\exists_{\mc H} S = \mc{H}\otimes S^\mc{H}$.
\end{prop}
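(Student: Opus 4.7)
The plan is to prove the two required inequalities: first that $\mathcal{H}\otimes S^{\mathcal{H}}$ contains $S$, and then that it is the least element of $\mathcal{H}\otimes\mathcal{C}(\mathcal{K})$ with that property. The bookkeeping about $S^{\mathcal{H}}$ actually being closed, and hence $\mathcal{H}\otimes S^{\mathcal{H}}$ being a valid element of $\mathcal{H}\otimes\mathcal{C}(\mathcal{K})$, is built into the definition of $S^{\mathcal{H}}$ as the closed span.

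For containment, I would take any $v\in S$ and expand it as $v=\sum_I a_i\otimes v_i$ relative to the \ts{onb} $(a_i)_I$. By the definition of $S^{\mathcal{H}}$, each coefficient $v_i$ lies in $S^{\mathcal{H}}$, so each $a_i\otimes v_i$ lies in $\mathcal{H}\otimes S^{\mathcal{H}}$; since the latter is a closed subspace and the sum converges in norm, $v\in \mathcal{H}\otimes S^{\mathcal{H}}$. This gives $S\leq\mathcal{H}\otimes S^{\mathcal{H}}$.

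For minimality, I would take any $T\in\mathcal{C}(\mathcal{K})$ with $S\leq\mathcal{H}\otimes T$ and show $S^{\mathcal{H}}\leq T$. Pick an \ts{onb} $(f_j)_J$ of $T$, so that $(a_i\otimes f_j)_{I\times J}$ is an \ts{onb} of $\mathcal{H}\otimes T$. Any $v\in S$ then expands as $v=\sum_{i,j}\lambda_{ij}\,a_i\otimes f_j = \sum_i a_i\otimes\bigl(\sum_j \lambda_{ij}f_j\bigr)$. The key ingredient is the uniqueness of the $\mathcal{K}$-coefficient expansion relative to $(a_i)_I$: if $\sum_i a_i\otimes u_i = \sum_i a_i\otimes w_i$, then taking the inner product with $a_{i_0}\otimes k$ for arbitrary $k\in\mathcal{K}$ gives $\langle u_{i_0}-w_{i_0},k\rangle=0$, forcing $u_{i_0}=w_{i_0}$. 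Applying this to the two expansions of $v$ identifies $v_i=\sum_j\lambda_{ij}f_j$, which lies in $T$ since $T$ is closed. Hence every generator of $S^{\mathcal{H}}$ lies in $T$, and $T$ being closed gives $S^{\mathcal{H}}\leq T$, so $\mathcal{H}\otimes S^{\mathcal{H}}\leq\mathcal{H}\otimes T$ by Proposition~\ref{prop:iso}.

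The main obstacle is the uniqueness step in the second half; once that is established, the rest is just matching definitions. One could alternatively invoke Lemma~\ref{lem:tech} to pass to the linear span of $S$ and reduce to finite sums, but the direct inner-product argument above seems cleaner.
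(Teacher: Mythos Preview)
Your proof is correct and follows the same two-step structure as the paper's: first show $S\leq\mc{H}\otimes S^{\mc{H}}$ via the coefficient expansion, then show minimality by proving $S^{\mc{H}}\leq T$ whenever $S\leq\mc{H}\otimes T$. The only difference is in the minimality step: the paper invokes Lemma~\ref{lem:tech} to compute $(\mc{H}\otimes T)^{\mc{H}}=T$ and then uses monotonicity of $(\cdot)^{\mc{H}}$, whereas you argue directly via an \ts{onb} of $T$ and uniqueness of the coefficient expansion---a route you yourself flag as equivalent to using the lemma.
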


\begin{proof}
By definition, $\mc{H}\otimes S^\mc{H}$ belongs to $\mc{H}\otimes\mc{C}(\mc{K})$. Let $v\in S$. Then $a_i\otimes v_i\in\mc{H}\otimes S^\mc{H}$ for each $i\in I$, and as $v=\sum_Ia_i\otimes v_i$ we have $v\in \mc{H}\otimes S^\mc{H}$. So $S\leq \mc{H}\otimes S^\mc{H}$. It remains to show that this is the least member of $\mc{H}\otimes\mc{C}(\mc{K})$ that contains $S$. Suppose $T\in\mc{C}(\mc{K})$ and $S\leq \mc{H}\otimes T$. Since $H\otimes T = \langle a_i\otimes b:i\in I, b\in T\rangle$, by Lemma~\ref{lem:tech} $(\mc{H}\otimes T)^\mc{H} = \langle (a_i\otimes b)_j:i,j\in I, b\in T\rangle$. Thus $S^\mc{H}\leq (\mc{H}\otimes T)^\mc{H} = T$. So $\mc{H}\otimes S^\mc{H}\leq \mc{H}\otimes T$ as required. 
\end{proof}

\begin{cor}
The definition of $S^\mc{H}$ is independent of the choice of \ts{onb} of $\mc{H}$. 
\end{cor}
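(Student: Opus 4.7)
The plan is to leverage the intrinsic (basis-free) description of $\exists_{\mc H}$ to transfer basis-independence from $\exists_{\mc H} S$ down to $S^\mc{H}$. The point is that the preceding proposition shows $\exists_{\mc H} S = \mc{H}\otimes S^\mc{H}$ for any closed $S$ and any \textsc{onb}, while $\exists_{\mc H} S$ was originally defined (prior to any basis choice) as the smallest element of $\mc{H}\otimes\mc{C}(\mc{K})$ above $S$. So the right-hand side must be the same subspace no matter which \textsc{onb} was used to construct $S^\mc{H}$.

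More precisely, I would proceed as follows. First assume $S\in\mc{C}(\mc{H}\otimes\mc{K})$. If $(a_i)_I$ and $(a'_i)_{I'}$ are two \textsc{onb}s of $\mc{H}$, and if $S^\mc{H}$ and $S^\mc{H\prime}$ denote the corresponding subspaces of $\mc{K}$ constructed using each basis, then the preceding proposition yields
\[\mc{H}\otimes S^\mc{H} \;=\; \exists_\mc{H} S \;=\; \mc{H}\otimes S^\mc{H\prime},\]
since the middle quantity depends only on $S$ and on the complete subalgebra $\mc{H}\otimes\mc{C}(\mc{K})$. Now invoke Proposition~\ref{prop:iso}: the map $\alpha:\mc{C}(\mc{K})\to\mc{C}(\mc{H}\otimes\mc{K})$ with $\alpha(B)=\mc{H}\otimes B$ is an \ts{ol}-embedding, hence injective, so $S^\mc{H}=S^\mc{H\prime}$.

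For a general subset $S\subseteq\mc{H}\otimes\mc{K}$, apply Lemma~\ref{lem:tech}, which gives $S^\mc{H}=\langle S\rangle^\mc{H}$ (and similarly with the other \textsc{onb}). Since $\langle S\rangle\in\mc{C}(\mc{H}\otimes\mc{K})$, the case already established yields $\langle S\rangle^\mc{H}=\langle S\rangle^\mc{H\prime}$, and hence $S^\mc{H}=S^\mc{H\prime}$.

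I do not expect any real obstacle here; the whole argument is a one-line consequence of the preceding proposition together with the injectivity from Proposition~\ref{prop:iso}. The only mild point to be careful about is ensuring the reduction from arbitrary subsets to closed ones via Lemma~\ref{lem:tech}, so that the corollary genuinely covers the definition as stated (where $S$ is merely a subset).
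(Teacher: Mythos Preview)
Your proposal is correct and follows exactly the paper's approach: use the preceding proposition to obtain $\mc{H}\otimes S^\mc{H}=\exists_\mc{H}S$, which is basis-free, and then invoke the injectivity of $B\mapsto\mc{H}\otimes B$ from Proposition~\ref{prop:iso}. Your extra reduction via Lemma~\ref{lem:tech} to handle arbitrary (not necessarily closed) subsets $S$ is a point of care that the paper's one-line proof leaves implicit.
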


\begin{proof}
$\exists_\mc{H}S = \mc{H}\otimes S^\mc{H}$ and by Proposition~\ref{prop:iso} if $\mc{H}\otimes T = \mc{H}\otimes T'$ then $T=T'$. 
\end{proof}

Consider a tensor product of three Hilbert spaces $\mc{H}\otimes\mc{K}\otimes\mc{M}$. Since tensor products are commutative and associative, we can consider this to be a binary tensor product in various ways, so our notation extends to allow $\exists_{\mc H}, \exists_{\mc{H}\otimes \mc{K}}, S^\mc{H}, S^{\mc{H}\otimes\mc{K}}$ and so forth. 

\begin{prop}
For $S\in\mc{C}(\mc{H}\otimes\mc{K}\otimes\mc{M})$, we have $\exists_\mc{H}\exists_\mc{K}S = \exists_{\mc{H}\otimes\mc{K}}S=\exists_\mc{K}\exists_\mc{H}S$. 
\end{prop}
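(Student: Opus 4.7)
My plan is to compute all three expressions explicitly using the formula $\exists_\mc{H} S = \mc{H}\otimes S^\mc{H}$ from the preceding proposition, and verify that in each case the result is $(\mc{H}\otimes\mc{K})\otimes S^{\mc{H}\otimes\mc{K}}$. This is the obvious candidate for the common value, since by associativity of the tensor product such a subspace simultaneously has the forms $\mc{H}\otimes W$, $\mc{K}\otimes W'$ and $(\mc{H}\otimes\mc{K})\otimes W''$, so it belongs to each of the three approximating subalgebras in question. The one nontrivial content is the computation of the iterated quantifier.

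The heart of the argument is to show $\exists_\mc{H}\exists_\mc{K} S = (\mc{H}\otimes\mc{K})\otimes S^{\mc{H}\otimes\mc{K}}$. I would fix \ts{onb}s $(a_i)_I$ of $\mc{H}$ and $(b_j)_J$ of $\mc{K}$, so that $(a_i\otimes b_j)_{I\times J}$ is an \ts{onb} of $\mc{H}\otimes\mc{K}$, and expand each $v\in S$ as $v=\sum_{i,j}(a_i\otimes b_j)\otimes v_{ij}$ with $v_{ij}\in\mc{M}$. The $\mc{K}$-coefficients of $v$ are then $u_j=\sum_i a_i\otimes v_{ij}\in\mc{H}\otimes\mc{M}$, so $\exists_\mc{K}S=\mc{K}\otimes S^\mc{K}$, viewing $\mc{H}\otimes\mc{K}\otimes\mc{M}$ as $\mc{K}\otimes(\mc{H}\otimes\mc{M})$. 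Setting $T=\mc{K}\otimes S^\mc{K}$, the set $X=\{k\otimes u_j:k\in\mc{K},\,v\in S,\,j\in J\}$ generates $T$. Reordering to view $X$ in $\mc{H}\otimes(\mc{K}\otimes\mc{M})$, each generator becomes $k\otimes u_j=\sum_i a_i\otimes(k\otimes v_{ij})$, so the $\mc{H}$-coefficients are the elements $k\otimes v_{ij}\in\mc{K}\otimes\mc{M}$. Lemma~\ref{lem:tech} then gives $T^\mc{H}=\langle X\rangle^\mc{H}=X^\mc{H}$, which is the closed subspace $\mc{K}\otimes S^{\mc{H}\otimes\mc{K}}$ of $\mc{K}\otimes\mc{M}$, since $S^{\mc{H}\otimes\mc{K}}=\langle v_{ij}:v\in S,\,i,j\rangle$. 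Therefore
\[\exists_\mc{H}\exists_\mc{K}S=\mc{H}\otimes T^\mc{H}=\mc{H}\otimes(\mc{K}\otimes S^{\mc{H}\otimes\mc{K}})=(\mc{H}\otimes\mc{K})\otimes S^{\mc{H}\otimes\mc{K}}=\exists_{\mc{H}\otimes\mc{K}}S\]
by associativity of the tensor product.

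The equality $\exists_\mc{K}\exists_\mc{H}S=\exists_{\mc{H}\otimes\mc{K}}S$ then follows by the symmetric argument with the roles of $\mc{H}$ and $\mc{K}$ swapped, using that $S^{\mc{H}\otimes\mc{K}}$ is independent of the ordering of the chosen \ts{onb}s (a consequence of the corollary above). The main obstacle I expect is purely bookkeeping: keeping straight the implicit identifications of $\mc{H}\otimes\mc{K}\otimes\mc{M}$ with $\mc{H}\otimes(\mc{K}\otimes\mc{M})$ and with $\mc{K}\otimes(\mc{H}\otimes\mc{M})$ used when applying the formula for $\exists_\mc{H}$ and $\exists_\mc{K}$. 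The substantive content is the single application of Lemma~\ref{lem:tech} that lets one compute $T^\mc{H}$ from a convenient generating set of $T$ rather than from all of $T$; no new tools beyond this lemma and the formula for $\exists$ should be required.
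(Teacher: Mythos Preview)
Your argument is correct and uses the same underlying computation as the paper: fix \ts{onb}s $(a_i)$, $(b_j)$, expand $v\in S$ with coefficients $v_{ij}\in\mc{M}$, and identify the common value as $(\mc{H}\otimes\mc{K})\otimes\langle v_{ij}\rangle$. The only organizational difference is that the paper first observes the easy inequality $\exists_\mc{H}\exists_\mc{K}S\leq\exists_{\mc{H}\otimes\mc{K}}S$ via the lattice structure and then chases generators for the reverse containment, whereas you compute $\exists_\mc{H}\exists_\mc{K}S$ in one stroke by invoking Lemma~\ref{lem:tech} on a generating set of $\exists_\mc{K}S$; both routes amount to the same coefficient calculation.
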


\begin{proof}
Since $\exists_{\mc{H}\otimes\mc{K}}S = \mc{H}\otimes\mc{K}\otimes T$ for some $T\in\mc{C}(\mc{K})$, it belongs to both $\mc{H}\otimes\mc{C}(\mc{K}\otimes\mc{M})$ and $\mc{K}\otimes\mc{C}(\mc{H}\otimes\mc{M})$. So $\exists_\mc{K}S$ lies beneath $\exists_{\mc{H}\otimes\mc{K}}S$, hence $\exists_\mc{H}\exists_\mc{K}S$ lies beneath $\exists_{\mc{H} \otimes\mc{K}}S$. 

Let $(a_i)_I$ and $(b_j)_J$ be \ts{onb}'s of $\mc{H}$ and $\mc{K}$. Then $(a_i\otimes b_j)_{I\times J}$ is an \ts{onb} of $\mc{H}\otimes\mc{K}$. Each $v\in\mc{H}\otimes\mc{K}\otimes\mc{M}$ has unique representations 
\[v=\sum_{I\times J} a_i\otimes b_j\otimes v_{i,j}\quad\mbox{ and }\quad v=\sum_Jb_j\otimes v_j\]
where $(v_{i,j})_{I\times J}$ is a family in $\mc{M}$ and $(v_j)_J$ is a family in $\mc{H}\otimes\mc{M}$. Uniqueness gives $v_j=\sum_I a_i\otimes v_{i,j}$. To show that $\exists_{\mc{H}\otimes\mc{K}}S\leq\exists_\mc{H}\exists_\mc{K}S$, it is sufficient to show that if $v\in S$ and $i_0,i_1\in I$, $j_0,j_1\in J$, then $a_{i_1}\otimes b_{j_1}\otimes v_{i_0,j_0}\in\exists_\mc{H}\exists_\mc{K}S$ since such elements generate the closed subspace $\exists_{\mc{H}\otimes\mc{K}}S$. As $v\in S$ we have $v_{j_0}\in S^\mc{K}$, and therefore $b_{j_1}\otimes v_{j_0}\in\exists_\mc{K}S$. But 
\[ b_{j_1}\otimes v_{j_0} = \sum_I a_i\otimes b_{j_1}\otimes v_{i,j_0}.\]
Since this element is in $\exists_\mc{K}S$, then $(b_{j_1}\otimes v_{j_0})_{i_0} = b_{j_1}\otimes v_{i_0,j_0}$ belongs to $(\exists_\mc{K}S)^\mc{H}$. Therefore $a_{i_1}\otimes b_{j_1}\otimes v_{i_0,j_0}$ belongs to $\exists_\mc{H}\exists_\mc{K}S$. This provides the equality of $\exists_\mc{H}\exists_\mc{K}S$ and $\exists_{\mc{H}\otimes\mc{K}}S$, the equality of $\exists_\mc{K}\exists_\mc{H}S$ and $\exists_{\mc{H}\otimes\mc{K}}S$ is by symmetry. 
\end{proof}

Using the associativity of a finite tensor product $\mc{H}_1\otimes\cdots\otimes\mc{H}_n$, or the fragment of associativity for a complete infinite tensor product $\bigotimes_I\mc{H}_i$ given in Proposition~\ref{prop:associative} we have the following. 

\begin{thm}
For a family $(\mc{H}_i)_I$ of Hilbert spaces, the operations $(\exists_{\mc{H}_i})_I$ are pairwise commuting quantifiers on the \ts{oml} $\mc{C}(\bigotimes_I\mc{H}_i)$ of closed subspaces of the complete tensor product. So $\mc{C}(\bigotimes_I\mc{H}_i)$ is an $I$-dimensional diagonal-free quantum cylindric algebra. 
\end{thm}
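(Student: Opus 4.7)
The plan is to reduce the general claim to the binary and ternary cases already established earlier in the section, using the fragment of associativity provided by Proposition~\ref{prop:associative} as the only new ingredient. Nothing essentially new has to be proved about $\exists_{\mc H}$ itself; what must be verified is that the earlier statements transfer cleanly when one factor (or a pair of factors) of the complete tensor product $\bigotimes_I\mc{H}_k$ is singled out.

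First, for a fixed $i\in I$, Proposition~\ref{prop:associative} gives an isomorphism $\bigotimes_I\mc{H}_k\simeq \mc{H}_i\otimes\bigotimes_{k\neq i}\mc{H}_k$. Under this identification we are in the binary setting of Proposition~\ref{prop:iso}, so $\mc{H}_i\otimes\mc{C}(\bigotimes_{k\neq i}\mc{H}_k)$ is a complete sub-ortholattice of $\mc{C}(\bigotimes_I\mc{H}_k)$, and therefore an approximating subalgebra. The proposition that an approximating subalgebra yields a quantifier then shows that $\exists_{\mc{H}_i}$ is a quantifier on $\mc{C}(\bigotimes_I\mc{H}_k)$.

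Next, for distinct $i,j\in I$ I apply associativity twice, separating first $\mc{H}_i$ and then $\mc{H}_j$ from the rest, to obtain $\bigotimes_I\mc{H}_k\simeq \mc{H}_i\otimes\mc{H}_j\otimes\bigotimes_{k\neq i,j}\mc{H}_k$ as a ternary tensor product. The three-factor proposition proved just above then gives, for every $S\in\mc{C}(\bigotimes_I\mc{H}_k)$,
\[
\exists_{\mc{H}_i}\exists_{\mc{H}_j}S \;=\; \exists_{\mc{H}_i\otimes\mc{H}_j}S \;=\; \exists_{\mc{H}_j}\exists_{\mc{H}_i}S,
\]
which is the required pairwise commutativity (C$_2$). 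Together with the quantifier property (C$_1$), this establishes that $\mc{C}(\bigotimes_I\mc{H}_k)$ is an $I$-dimensional diagonal-free quantum cylindric algebra, since no diagonals are required and the remaining cylindric axioms (C$_3$)--(C$_5$) are vacuous in the diagonal-free case.

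The one substantive point to watch is coherence: I need to know that the operation $\exists_{\mc{H}_i}$ produced by the binary decomposition agrees with the one produced by the ternary decomposition, i.e.\ that the distinguished approximating subalgebra depends only on the chosen factor $\mc{H}_i$ and not on how the other factors are aggregated. This is where the hidden work lies, and it is the reason we rely specifically on Proposition~\ref{prop:associative} rather than on a full associativity isomorphism, which does not hold for the complete tensor product. The check amounts to observing that the isomorphism of Proposition~\ref{prop:associative} carries the sub-ortholattice $\mc{H}_i\otimes\mc{C}(\mc{H}_j\otimes\bigotimes_{k\neq i,j}\mc{H}_k)$ onto $\mc{H}_i\otimes\mc{C}(\bigotimes_{k\neq i}\mc{H}_k)$, because in both cases this is the sub-ortholattice generated by elementary tensors whose first factor is an arbitrary vector of $\mc{H}_i$; once this is noted the two quantifier operations coincide by the uniqueness half of the approximating-subalgebra/quantifier correspondence, and the proof is complete.
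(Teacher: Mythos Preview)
Your proposal is correct and follows exactly the route the paper takes: the paper states the theorem as an immediate consequence of the preceding ternary-factor proposition together with associativity (finite case) or Proposition~\ref{prop:associative} (infinite case), and gives no further argument. Your write-up is simply a careful unpacking of that one-line reduction, including the coherence check that the paper leaves implicit.
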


\begin{example} {\em 
Let $\mc{L}$ be a first-order language consisting of a family $\mc{R}$ of relation symbols, a set  $(x_i)_I$ of variables, and connectives $\wedge,\vee,\perp,0,1,\exists_{x_i},\forall_{x_i}$. To each basic formula $r(x_{i_1},\ldots,x_{i_n})$ we associate a closed subspace of $\bigotimes_I\mc{H}_i$. Interpreting the connectives $\wedge,\vee,\perp,0,1, \exists_{x_i}, \forall_{x_i}$ as meet, join, orthocomplementation, 0, 1, $\exists_{\mc{H}_i}$ and $\forall_{\mc{H}_i}$ gives a subalgebra of the diagonal-free quantum cylindric algebra described above. 
}
\end{example}

We next consider diagonal elements. Here we require a tensor product $\mc{H}_1\otimes\cdots\otimes\mc{H}_n$ with all factors equal to the same Hilbert space $\mc{H}$. We often write this $\mc{H}^{\otimes n}$. The treatment of diagonals  follows the path of Weaver \cite{Weaver01} in using the symmetric tensor product $\mc{H}\otimes_s\mc{H}$. For example, $D_{1,2} = (\mc{H}_1\otimes_s\mc{H}_2)\otimes \mc{H}_3\otimes\cdots\otimes\mc{H}_n$. We introduce notation to facilitate computations with these diagonals. 

\begin{defn}
For a natural number $n$ and $F\subseteq \{1,\ldots,n\}$ let $\Perm(n)$ be the group of permutations of $\{1,\ldots,n\}$ and $\Perm_n(F)$ be the members of $\Perm(n)$ that fix all elements not in $F$. 
\end{defn}

Let $(a_i)_I$ be an \ts{onb} of $\mc{H}$, then the $a_{\alpha(1)}\otimes\cdots\otimes a_{\alpha(n)}$, where $\alpha\in I^n$ is an $n$-tuple of members of $I$, is an \ts{onb} of $\mc{H}^{\otimes n}$. Thus each $v\in\mc{H}^{\otimes n}$ has a unique representation 
\[ v=\sum_{\alpha\in I^n} \lambda_\alpha\, a_{\alpha_1}\otimes\cdots\otimes a_{\alpha_n} \]
Note that if $\sigma\in$ Perm$(n)$ and $\alpha\in I^n$, the composite $\alpha\sigma=(\alpha_{\sigma(1)},\ldots,\alpha_{\sigma(n)})$ is in $I^n$. 

\begin{defn}
For $(a_i)_I$ an \ts{onb} of $\mc{H}$ and $F\subseteq \{1,\ldots,n\}$ let 
\[D_F = \langle\, \sum_{I^n} \lambda_\alpha a_{\alpha(1)}\otimes\cdots\otimes a_{\alpha(n)} : \lambda_\alpha=\lambda_{\alpha\sigma} \mbox{ for all $\sigma\in $ {\em Perm}$_n(F)$}\rangle\]
\end{defn}

$D_F$ is isomorphic in a natural way to the closed subspace of $\mc{H}^{\otimes n}$ formed by replacing the tensor product of the factors belonging to $F$ with their symmetric tensor product. So the definition is independent of the choice of \ts{onb}. We often use $D_{i,j}$ for $D_{\{i,j\}}$ and so forth. Trivially $D_{i,j}=D_{j,i}$ and $D_{i,i} = \mc{H}^{\otimes n}$. Also, since the group $\Perm_n(\{i,j,k\})$ is generated by the transposition of $i,j$ and the transposition of $j,k$, it follows that $D_{i,j}\cap D_{j,k} = D_{i,j,k}$. 

\begin{prop}
If $j\neq i,k$, then $D_{i,k}=\exists_{\mc{H}_j}(D_{i,j}\cap D_{j,k})$. 
\end{prop}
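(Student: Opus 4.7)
The plan is to use the observation made just before the proposition, that $D_{i,j}\cap D_{j,k}=D_{i,j,k}$, and thus reduce the claim to proving the equality $\exists_{\mc H_j}D_{i,j,k}=D_{i,k}$. I would handle the two inclusions separately, with rather different arguments.

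For the inclusion $\exists_{\mc H_j}D_{i,j,k}\leq D_{i,k}$, first note that since $(ik)\in\Perm_n(\{i,j,k\})$, the stronger symmetry defining $D_{i,j,k}$ gives $D_{i,j,k}\leq D_{i,k}$. It then suffices to show that $D_{i,k}$ itself is stable under $\exists_{\mc H_j}$, equivalently that it already lies in the image of the embedding $\mc C(\bigotimes_{\ell\neq j}\mc H_\ell)\hookrightarrow\mc C(\mc H^{\otimes n})$ of Proposition~\ref{prop:iso}. For this, observe that each orbit-sum generator $a_\alpha+a_{\alpha(ik)}$ of $D_{i,k}$ has identical entry $\alpha(j)$ in slot $j$, so it factors as $a_{\alpha(j)}\otimes w_\alpha$ for some $w_\alpha$ in the tensor product over the remaining coordinates. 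Letting $\alpha$ vary shows $D_{i,k}=\mc H_j\otimes\widetilde D_{i,k}$, where $\widetilde D_{i,k}$ is the analogous diagonal in $\bigotimes_{\ell\neq j}\mc H_\ell$. Hence $\exists_{\mc H_j}D_{i,k}=D_{i,k}$, and the inclusion follows by monotonicity of $\exists_{\mc H_j}$.

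For the reverse inclusion $D_{i,k}\leq\exists_{\mc H_j}D_{i,j,k}$, fix an \ts{onb} $(a_m)_M$ of $\mc H$ and show that each generator $a_\alpha+a_{\alpha(ik)}$ of $D_{i,k}$ belongs to $\mc H_j\otimes(D_{i,j,k})^{\mc H_j}$. Write $(p,s,r)$ for the entries of $\alpha$ at coordinates $(i,j,k)$, and let $v_\alpha=\sum_{\sigma\in\Perm_n(\{i,j,k\})}a_{\alpha\sigma}$ be the corresponding orbit-sum generator of $D_{i,j,k}$. Compute the slice of $v_\alpha$ at $a_s$ in slot $j$: it collects those terms $a_{\alpha\sigma}$ with $\alpha(\sigma(j))=s$, and when restricted to the positions $\ell\neq j$ these terms form a positive multiple of the $(ik)$-orbit sum of the restricted tuple, hence lie in $\widetilde D_{i,k}\subseteq(D_{i,j,k})^{\mc H_j}$. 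Tensoring back by $a_s$ in slot $j$ recovers a nonzero multiple of $a_\alpha+a_{\alpha(ik)}$, so this generator belongs to $\mc H_j\otimes(D_{i,j,k})^{\mc H_j}=\exists_{\mc H_j}D_{i,j,k}$.

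The main obstacle, or rather the only bookkeeping that requires care, is the case analysis according to how many of $p,s,r$ coincide: the $\Perm_n(\{i,j,k\})$-orbit of $\alpha$ and the $(ik)$-orbit of its restriction to $\ell\neq j$ collapse in matching ways, so the slice computation always delivers the targeted orbit sum up to a positive combinatorial multiplicity.
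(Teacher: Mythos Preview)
Your proposal is correct and follows the same route as the paper: for one inclusion show $D_{i,j,k}\leq D_{i,k}$ and that $D_{i,k}$ lies in $\mc{H}_j\otimes\mc{C}(\bigotimes_{\ell\neq j}\mc{H}_\ell)$, and for the other slice a symmetric element of $D_{i,j,k}$ at slot $j$ to recover the $(ik)$-orbit generators. The only difference is cosmetic: the paper sidesteps your case analysis by taking $v$ with entries $(p,q,q)$ in slots $(i,j,k)$ rather than the full orbit sum of the given $\alpha$, so slicing at $a_q$ in slot $j$ reduces to just the two cases $p\neq q$ and $p=q$, after which one tensors back with an arbitrary $a_r$ in slot $j$.
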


\begin{proof}
For convenience assume $n=4$. We will show that $D_{1,2}=\exists_{\mc{H}_3}(D_{1,3}\cap D_{3,2})$. That $D_{1,1}=\exists_{\mc{H}_2}(D_{1,2}\cap D_{2,1})$ is similar and other cases are symmetric.
Note that $D_{1,2} =\langle a_i\otimes a_j + a_j\otimes a_i:i,j\in I\rangle \otimes \mc{H}_3\otimes \mc{H}_4$. So $D_{1,2}$ belongs to $\mc{H}_3\otimes \mc{C}(\mc{H}_1\otimes\mc{H}_2\otimes\mc{H}_4)$ and lies above $D_{1,2,3}$. Thus $\exists_{\mc{H}_3}(D_{1,3}\cap D_{3,2})\leq D_{1,2}$. Let $p,q,r,s\in I$ and set 
\[ v = (a_p\otimes a_q\otimes a_q + a_q\otimes a_p\otimes a_q + a_q\otimes a_q\otimes a_p)\otimes a_s. \]
Then $v\in D_{1,2,3}$. To compute $(D_{1,2,3})^{\mc{H}_3}$ express $v=\sum_Ia_i\otimes v_i$ where $a_i$ appears in the third tensor factor. If $p\neq q$, then $v_q = (a_p\otimes a_q + a_q\otimes a_p)\otimes a_s$ and if $p=q$ then $v_q=3a_p\otimes a_p\otimes a_p\otimes a_s$. In either case, the generator $(a_p\otimes a_q+a_q\otimes a_p)\otimes a_r\otimes a_s$ of $D_{1,2}$ belongs to $\mc{H}_3\otimes (D_{1,2,3})^{\mc{H}_3} = \exists_{\mc{H}_3}(D_{1,3}\cap D_{3,2})$. 
\end{proof}

\begin{prop}
$\exists_{\mc{H}_1}(D_{1,2}\cap S)\cap\exists_{\mc{H}_1}(D_{1,2}\cap S^\perp)= 0$ need not hold.
\end{prop}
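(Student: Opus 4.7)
The plan is to exhibit an explicit counterexample in the smallest non-trivial case: take $\mc{H}$ two-dimensional and $n=2$, so we work inside the four-dimensional tensor square $\mc{H}^{\otimes 2}$. Fix an \ts{onb} $\{a_0,a_1\}$ of $\mc{H}$. Directly from the definition, $D_{1,2}$ is the three-dimensional symmetric subspace $\langle a_0\otimes a_0,\; a_1\otimes a_1,\; a_0\otimes a_1 + a_1\otimes a_0\rangle$. I would choose $S = \langle a_0\otimes a_0\rangle$. Since $S\subseteq D_{1,2}$, we have $D_{1,2}\cap S = S$, and an elementary calculation of orthogonal complements within the three-dimensional $D_{1,2}$ gives $D_{1,2}\cap S^\perp = \langle a_1\otimes a_1,\; a_0\otimes a_1 + a_1\otimes a_0\rangle$.

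Next I would invoke the explicit formula $\exists_\mc{H} T = \mc{H}\otimes T^{\mc H}$ established earlier in the section. Decomposing $a_0\otimes a_0$ with respect to the first-factor \ts{onb} $\{a_0,a_1\}$ yields second-factor components $a_0$ and $0$, so $S^{\mc{H}_1} = \langle a_0\rangle$ and $\exists_{\mc{H}_1}(D_{1,2}\cap S) = \mc{H}\otimes \langle a_0\rangle$. Decomposing the two generators of $D_{1,2}\cap S^\perp$ likewise gives second-factor components $(0,a_1)$ and $(a_1,a_0)$ respectively, and these vectors together span all of $\mc{H}$; hence $\exists_{\mc{H}_1}(D_{1,2}\cap S^\perp) = \mc{H}\otimes \mc{H}$, the full space. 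The intersection is therefore $\mc{H}\otimes\langle a_0\rangle$, which contains the non-zero vector $a_0\otimes a_0$.

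The mechanism behind the failure is exactly the phenomenon flagged in Remark~\ref{rem:substitution}: inside the non-Boolean lattice $\mc{C}(\mc{H}^{\otimes 2})$ the sets $S$ and $D_{1,2}\cap S^\perp$ are complementary pieces of $D_{1,2}$ but are not orthocomplements of each other in $\mc{C}(\mc{H}^{\otimes 2})$, so there is no reason for the quantifier $\exists_{\mc{H}_1}$ to send them to orthogonal cylinders. There is no serious obstacle in the argument; once the formula $\exists_\mc{H}T = \mc{H}\otimes T^{\mc H}$ is in hand, the whole proof reduces to two short \ts{onb} decompositions in a four-dimensional Hilbert space.
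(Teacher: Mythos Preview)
Your counterexample is correct and in fact more economical than the paper's. Both arguments work in $\mc{H}^{\otimes 2}$ and both rely on the formula $\exists_{\mc H}T=\mc{H}\otimes T^{\mc H}$, but the paper requires $\dim\mc{H}\geq 3$: it takes three distinct basis vectors $a_p,a_q,a_r$, sets $S=\langle a_p\otimes a_q+a_q\otimes a_p\rangle$, and uses the second symmetric vector $w=a_p\otimes a_r+a_r\otimes a_p\in D_{1,2}\cap S^\perp$ to force $a_p$ into $(D_{1,2}\cap S^\perp)^{\mc{H}_1}$. You achieve the same overlap already with $\dim\mc{H}=2$ by choosing the ``diagonal'' generator $S=\langle a_0\otimes a_0\rangle$; the off-diagonal symmetric vector $a_0\otimes a_1+a_1\otimes a_0$ then lands in $D_{1,2}\cap S^\perp$ and its first-factor components immediately span all of $\mc{H}$. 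Your example is the minimal one; the paper's has the mild advantage that $S$ itself is spanned by an entangled vector, which perhaps makes the ``quantum'' nature of the failure more visible.

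One small quibble with your closing commentary: in your example $S$ and $D_{1,2}\cap S^\perp$ \emph{are} mutually orthogonal (they are orthocomplements within the interval $[0,D_{1,2}]$); the point is rather that $\exists_{\mc{H}_1}$ need not send orthogonal subspaces to orthogonal subspaces, since it is join-preserving but not meet-preserving. This does not affect the validity of your proof.
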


\begin{proof}
We give an example with $n=2$, but it can be modified for any $n\geq 2$. Let $(a_i)_I$ be an \ts{onb} of $\mc{H}$ and $p,q,r$ be distinct elements of $I$. Set $v=a_p\otimes a_q+a_q\otimes a_p$ and $w=a_p\otimes a_r+a_r\otimes a_p$. Note that $v,w$ are orthogonal and both belong to $D_{1,2}$. Set $S=\langle v\rangle$. Then $S^{\mc{H}_1}=\langle a_p,a_q\rangle$. So $\exists_{\mc{H}_1}(D_{1,2}\cap S) = \exists_{\mc{H}_1}S = \mc{H}_1\otimes \langle a_p,a_q\rangle$. 
Since $w\in D_{1,2}\cap S^\perp$ we have $a_p\in (D_{1,2}\cap S^\perp)^{\mc{H}_1}$. It follows that $\mc{H}_1\otimes \langle a_p\rangle$ is contained in both $\exists_{\mc{H}_1}(D_{1,2}\cap S)$ and $\exists_{\mc{H}_1}(D_{1,2}\cap S^\perp)$. 
\end{proof}

Remark~\ref{rem:substitution} discussed substitution in relation to axiomatics of quantum cylindric algebras. It would be of interest to explore the matter further with an eye towards a version of quantum predicate calculus closed tied to its origins in subspace lattices. 

\begin{remark} {\em 
We indicated that diagonal-free versions of quantum set algebras can be obtained in the infinite-dimensional setting using the complete tensor product $\bigotimes_I\mc{H}_i$. Our discussion of diagonals carries over to a complete tensor power $\mc{H}^{\otimes I}$ without difficulty. If one wanted, the incomplete tensor product could also be used with no essential difficulties, but the complete tensor product seems more natural here.
}
\end{remark}

\begin{remark} {\em 
The path taken to realize diagonal-free quantum cylindric algebras can be extended to the von Neumann algebra setting. Suppose $\mc{M}_1,\ldots\mc{M}_n$ are concrete von Neumann algebras. Then we can define the quantifier $\exists_{\mc{M}_1}$ on the projections $P(\mc{M}_1\otimes\cdots\otimes\mc{M}_n)$ via the complete subalgebra $1\otimes P(\mc{M}_2\otimes\cdots\otimes\mc{M}_n)$ and so forth. We won't develop this further here. 
}
\end{remark}

\section{Kripke frames}

In their work on canonical extensions, J\'{o}nsson and Tarski \cite{JT51a} developed a general method to associate to any \ts{ba} with additional operators $\mc{A}$ a relational structure $\mc{A}_+$, now commonly called a Kripke frame, and to any relational structure $\mc{X}$, a complete atomic \ts{ba} with additional operators $\mc{X}^+$, such that $\mc{A}\leq (\mc{A}_+)^+$. They used this to show that each cylindric algebra can be embedded into a complete atomic cylindric algebra. The frame $\mc{A}_+$ is constructed from $\mc{A}$ via ultrafilters of the \ts{ba} $\mc{A}$ in a way that will not be directly applicable here. Passage from $\mc{X}$ to $\mc{X}^+$ is given as follows. 

\begin{defn}
A relational structure $\mc{X}=(X,(R_i)_I)$ is a set $X$ with a family $(R_i)_I$ of $n_i+1$-ary relations on $X$. Its complex algebra $\mc{X}^+$ is the powerset $P(X)$ with $n_i$-ary operations given by relational image $f_i(A_1,\ldots,A_{n_i})=R_i[A_1,\ldots,A_{n_i}]$ where 
\[R_i[A_1,\ldots,A_{n_i}] = \{x:R_i(a_1,\ldots,a_{n_i},x)\mbox{ for some }a_1\in A_1,\ldots,a_{n_i}\in A_{n_i}\}.\]
\end{defn}

Conditions for $\mc{X}^+$ to be a monadic algebra are in \cite{JT51a} and for $\mc{X}^+$ to be a cylindric algebra in \cite[Thm.~2.7.40]{HMT85a}. An abbreviated account is given below. 

\begin{prop}
For a binary relational structure $\mc{X}=(X,R)$, the complex algebra $\mc{X}^+$ is a monadic algebra iff $R$ is an equivalence relation on $X$. For a relational structure $\mc{X}=(X,(R_i)_I)$, the complex algebra $\mc{X}^+$ is a diagonal-free cylindric algebra iff the $R_i$ are pairwise commuting equivalence relations. 
\end{prop}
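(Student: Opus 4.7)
The strategy is to translate each quantifier axiom (Q$_1$)--(Q$_5$) into a first-order condition on $R$ and identify when together they assert that $R$ is an equivalence relation. For any binary relation $R$ on $X$, the operation $f_R(A) = R[A]$ preserves arbitrary unions and sends $\emptyset$ to $\emptyset$, so (Q$_1$) and (Q$_3$) hold automatically. The remaining three axioms will pin down reflexivity, transitivity, and symmetry in turn.

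First I would dispatch (Q$_2$) and (Q$_4$). Applied to a singleton $A = \{a\}$, the inclusion $A \subseteq R[A]$ forces $R(a,a)$, so (Q$_2$) is equivalent to reflexivity. Given reflexivity, the inclusion $f_R(A) \subseteq f_Rf_R(A)$ in (Q$_4$) follows from (Q$_2$) applied to $f_R(A)$, so (Q$_4$) reduces to $R[R[A]] \subseteq R[A]$. Unpacked, this says $R(a,y)$ and $R(y,x)$ with $a \in A$ implies $R(a',x)$ for some $a' \in A$; specializing to $A = \{a\}$ gives transitivity, and transitivity trivially gives the inclusion.

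The interesting axiom is (Q$_5$), which encodes symmetry. Assuming reflexivity, $f_R(A)^\perp \subseteq f_R(f_R(A)^\perp)$ is automatic from (Q$_2$), so (Q$_5$) reduces to $R\bigl[X \setminus R[A]\bigr] \subseteq X \setminus R[A]$. To obtain symmetry, suppose this inclusion holds and $\neg R(a,b)$; take $A = \{a\}$, so that $b \in X \setminus R[A]$. Then every $R$-successor of $b$ lies in $X \setminus R[A]$, so if also $R(b,a)$ we would get $a \in X \setminus R[A]$, contradicting reflexivity at $a$. Hence $R$ is symmetric. Conversely, if $R$ is an equivalence relation then $R[A]$ is a union of equivalence classes, so is its complement, and $f_R$ applied to any union of classes returns the same set by reflexivity, giving (Q$_5$). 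Combining the three translations yields the first assertion.

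For the cylindric statement, recall that a diagonal-free cylindric algebra is a Boolean algebra with a family of pairwise commuting quantifiers. The monadic case just proved shows that each $c_i = f_{R_i}$ is a quantifier iff $R_i$ is an equivalence relation, so it only remains to show that (C$_2$), $c_ic_j = c_jc_i$ on $P(X)$, is equivalent to $R_i \circ R_j = R_j \circ R_i$. This is immediate from the identity $R_i[R_j[A]] = (R_j \circ R_i)[A]$ and equality of set-theoretic operations on singletons. The only delicate step in the whole argument is the derivation of symmetry from (Q$_5$): the key move is to test the axiom on $A = \{a\}$ and play the resulting closure property of $X \setminus R[a]$ under $R$-successors against reflexivity at $a$ itself. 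Everything else is a routine translation between the algebraic and the relational formalisms.
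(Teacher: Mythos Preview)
Your argument is correct. The paper does not actually give its own proof of this proposition; it simply records the result with citations to J\'onsson--Tarski and to Henkin--Monk--Tarski, so there is no in-paper proof to compare against directly.

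That said, your extraction of symmetry from (Q$_5$) is exactly the move the paper later uses in the orthoframe setting: in proving that $(X,\neq,R)$ is a monadic orthoframe iff $R$ is an equivalence relation, the paper also tests the closure condition on a singleton and plays it off against reflexivity to force symmetry. So your approach is in complete harmony with the paper's own methods where it does supply details. One cosmetic point: in the cylindric half, be careful with the convention for relational composition when writing $R_i[R_j[A]] = (R_j\circ R_i)[A]$; the substance of the argument (commutation of the operators on all singletons is equivalent to commutation of the relations) is of course unaffected.
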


We make first steps to develop the connection between monadic and cylindric \ts{ol}'s and frames. ``Under the hood'' our approach departs from that of J\'{o}nsson and Tarski in that it is based on MacNeille completions rather than canonical extensions. 

\begin{defn}
An orthoframe $\mc{X}=(X,\perp)$ is a set with an irreflexive, symmetric binary relation called an orthogonality relation. For $A\subseteq X$ set 
$$A^\perp = \{x\in X : a\perp x \mbox{ for all }a\in A\}$$ 
and let $\mc{L}(X,\perp) = \{A:A=A^{\perp\perp}\}$ be all biorthogonally closed subsets of $X$. 
\end{defn}

It is well-known \cite{Kalmbach83} that $\mc{L}(X,\perp)$ is a complete \ts{ol} with meets given by intersections, joins by the biorthogonal of the union, and with $A^\perp$ the orthocomplement of $A$. If $L$ is an \ts{ol}, then $(L^*,\perp)$ is an orthoframe where $L^*=L\setminus \{0\}$ and $x\perp y$ iff $x\leq y^\perp$. In this case $\mc{L}(L^*,\perp)$ is the MacNeille completion of $L$, and in particular is a complete \ts{ol} containing $L$ as a sub-\ts{ol}. Unfortunately, it is not possible to give first-order properties of $(X,\perp)$ equivalent to $\mc{L}(X,\perp)$ being an \ts{oml} \cite{Goldblatt84}. 

\begin{defn}\label{defn:monadicorthoframe}
A relational structure $(X,\perp,R)$ is a monadic orthoframe if $\perp$ and $R$ are binary relations on $X$ that satisfy
\begin{itemize}[leftmargin=.4in]
\item[{\em \small (M$_1$)}] $\perp$ is an orthogonality relation,
\item[{\em \small (M$_2$)}] $R$ is reflexive and transitive,
\item[{\em \small (M$_3$)}] for each $x\in X$, the set $R[\{x\}]^\perp$ is closed under $R$.
\end{itemize}
Here {\em \small (M$_3$)} means $R[R[\{x\}]^\perp] \subseteq R[\{x\}]^\perp$ for each $x\in X$. Since {\em \small (M$_2$)} gives reflexivity, set inclusion here can be replaced with equality. 
\end{defn}

For any set $X$, the relation $x\perp y \Leftrightarrow x\neq y$ is an orthogonality relation that we denote $\neq$. The \ts{ol} $\mc{L}(X,\neq)$ is the powerset of $X$. If we consider orthoframes restricted to having this classical orthogonality relation, we obtain exactly the frames corresponding to classical monadic algebras as the following result shows. 

\begin{prop}
For any set $X$, the frame $(X,\neq,R)$ is a monadic orthoframe iff $R$ is an equivalence relation on $X$. 
\end{prop}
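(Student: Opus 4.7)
My plan is to unpack condition (M$_3$) in the specific case $\perp = \neq$ and show that, combined with (M$_2$), it is equivalent to the symmetry of $R$. When $\perp$ is $\neq$, the biorthogonal closure is just set complementation: $A^\perp = X \setminus A$ for every $A \subseteq X$. So $R[\{x\}]^\perp = \{y : \neg\, xRy\}$, and (M$_3$) says that whenever $\neg\, xRy$ and $yRz$ hold, then $\neg\, xRz$ also holds. Taking the contrapositive, (M$_3$) becomes the first-order condition
\[(\ast)\qquad xRz \,\wedge\, yRz \;\Longrightarrow\; xRy \qquad \text{for all } x,y,z \in X.\]

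For the forward direction, I assume (M$_2$) and (M$_3$) and prove symmetry of $R$. Given $xRy$, reflexivity (from (M$_2$)) gives $yRy$. Applying $(\ast)$ with the two antecedents $yRy$ and $xRy$ (sharing the target $y$) yields $yRx$. Together with the reflexivity and transitivity already supplied by (M$_2$), this shows $R$ is an equivalence relation.

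For the converse, suppose $R$ is an equivalence relation. Reflexivity and transitivity give (M$_2$) immediately. For (M$_3$), in the reformulation $(\ast)$, if $xRz$ and $yRz$, symmetry gives $zRy$, and transitivity with $xRz$ yields $xRy$. So (M$_3$) holds. The orthogonality condition (M$_1$) for $\neq$ is automatic since $\neq$ is irreflexive and symmetric on any set. There is no real obstacle here; the only slightly delicate point is noticing that in the classical case the biorthogonal collapses to complementation, which turns the second-order-looking closure condition (M$_3$) into the simple first-order statement $(\ast)$ from which symmetry falls out using reflexivity.
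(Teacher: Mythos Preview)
Your proof is correct and follows essentially the same idea as the paper: in the classical case $\perp=\neq$ one has $A^\perp=X\setminus A$, so (M$_3$) reduces to a first-order condition that, together with reflexivity, forces symmetry. Your contrapositive reformulation $(\ast)$ makes the derivation of symmetry especially transparent, while the paper argues the same point by contradiction; the two arguments are logically equivalent.
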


\begin{proof}
Suppose that $R$ is an equivalence relation on $X$. Clearly $\neq$ is an orthogonality relation and $R$ is reflexive and transitive. For $x\in X$ we have $R[\{x\}]$ is the equivalence class of $x$, and since $\perp$ is $\neq$, we have $R[\{x\}]^\perp$ is the set-theoretic complement of this equivalence class, which is indeed closed under $R$. So $(X,\neq,R)$ is a monadic orthoframe. Conversely, suppose $(X,\neq,R)$ is a monadic orthoframe. By definition $R$ is reflexive and transitive. Suppose $x\,R\,y$. If it is not the case that $y\,R\, x$, then $y$ belongs to the set-theoretic complement of $R[\{x\}]$ which in the current setting is given by $R[\{x\}]^\perp$. Since $R[\{x\}]^\perp$ is closed under $R$ and $x\not\in R[\{x\}]^\perp$ (since $R$ is reflexive and $\perp$ is irreflexive), it is not the case that $y\, R\, x$. So $R$ is symmetric, hence an equivalence relation. 
\end{proof}

Note that Definition~\ref{defn:monadicorthoframe} is within the first-order language of the relational structure since quantification in {\small (M$_3$)} is over individuals in $X$ and not subsets of $X$. However, the conditions are sufficient to give the version of {\small (M$_3$)} when quantified over all subsets and not just singletons. This is the content of the first statement of the following result. 

\begin{lemma} \label{lem:R}
Let $(X,\perp,R)$ be a monadic orthoframe. Then for each $A\subseteq X$
\begin{itemize}
\item[{\em\small (1)}] $R[A]^\perp$ is closed under $R$,
\item[{\em\small (2)}] $R[A]^{\perp\perp}$ is closed under $R$,
\item[{\em\small (3)}] $R[A^{\perp\perp}]\subseteq R[A]^{\perp\perp}$. 
\end{itemize}
Hence, as $R$ is reflexive, $R[R[A]^\perp]=R[A]^\perp$ and $R[R[A]^{\perp\perp}]=R[A]^{\perp\perp}$. 
\end{lemma}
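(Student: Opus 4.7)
The plan is to derive (1), (2), (3) in order, using (1) to bootstrap the other two, and then read off the final sentence for free from reflexivity.

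For part (1), the idea is to reduce the set-level statement to the pointwise statement (M$_3$). I would observe that $R[A]=\bigcup_{a\in A}R[\{a\}]$, and therefore $R[A]^\perp = \bigcap_{a\in A} R[\{a\}]^\perp$ (orthogonality against a union equals the intersection of orthogonals). Now if $y\in R[A]^\perp$ and $y\,R\,z$, then for each $a\in A$ we have $y\in R[\{a\}]^\perp$, so (M$_3$) gives $z\in R[\{a\}]^\perp$, and taking the intersection over $a\in A$ puts $z$ back in $R[A]^\perp$. This is the only place where the axioms of a monadic orthoframe are used.

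For part (2), I would apply (1) with $A$ replaced by $B:=R[A]^\perp$, giving that $R[B]^\perp$ is closed under $R$. To identify $R[B]^\perp$ with $R[A]^{\perp\perp}$, I use reflexivity of $R$ (which gives $R[A]^\perp\subseteq R[R[A]^\perp]$) together with (1) applied to the original $A$ (which gives the reverse inclusion $R[R[A]^\perp]\subseteq R[A]^\perp$). Hence $R[R[A]^\perp]=R[A]^\perp$, and so $R[B]^\perp = R[R[A]^\perp]^\perp = R[A]^{\perp\perp}$, which is therefore closed under $R$. The final sentence of the lemma is precisely the equalities obtained from reflexivity in this step, combined with the analogous one for $R[A]^{\perp\perp}$ that falls out of (2) in the same way.

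For part (3), I first note $A\subseteq R[A]$ by reflexivity of $R$, and $R[A]\subseteq R[A]^{\perp\perp}$ since biorthogonal is a closure operator on $P(X)$. Since $R[A]^{\perp\perp}$ is biorthogonally closed, the inclusion $A\subseteq R[A]^{\perp\perp}$ upgrades to $A^{\perp\perp}\subseteq R[A]^{\perp\perp}$. Applying $R[\,\cdot\,]$ (monotone) and then using (2) gives $R[A^{\perp\perp}]\subseteq R[R[A]^{\perp\perp}]\subseteq R[A]^{\perp\perp}$, which is what we wanted.

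There is no genuine obstacle; the mild pitfall is just keeping the direction of the inclusions straight when juggling $R[\,\cdot\,]$, $\perp$, and reflexivity, especially in the bootstrapping step from (1) to (2) where one must recognize $R[A]^{\perp\perp}$ as being of the form $R[B]^\perp$ for $B=R[A]^\perp$ so that (1) can be reapplied.
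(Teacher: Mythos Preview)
Your proof is correct and follows essentially the same approach as the paper. Part (1) is identical; for (2) the paper writes the same argument as a short chain of equalities (setting $O=\perp$, it verifies $ROORA=RORORA=ORORA=OORA$ using the identity $RORA=ORA$), which is just your recognition of $R[A]^{\perp\perp}$ as $R[B]^\perp$ with $B=R[A]^\perp$ unpacked; part (3) is the same argument, with your version spelling out the intermediate inclusion $A^{\perp\perp}\subseteq R[A]^{\perp\perp}$ a bit more explicitly.
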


\begin{proof}
(1) Since $R[A]=\bigcup\{R[\{x\}]:x\in A\}$ then $R[A]^\perp = \bigcap\{R[\{x\}]^\perp:x\in A\}$. So if $y\in R[A]^\perp$ and $y\,R\,z$, then $y\in R[\{x\}]^\perp$ for all $x\in A$, and since each $R[\{x\}]^\perp$ is closed under $R$, it follows that $z\in R[A]^\perp$. This gives the first statement and as $R$ is reflexive, gives $R[R[A]^\perp]=R[A]^\perp$. (2) It is convenient to write $A^\perp$ as $OA$ and $R[A]$ as $RA$. Then the first statement gives $RORA=ORA$ for all $A\subseteq X$. Then 
\[ ROORA = RORORA =  ORORA = OORA \]
giving $R[R[A]^{\perp\perp}] = R[A]^{\perp\perp}$ and hence that $R[A]^{\perp\perp}$ is closed under $R$. (3)~Since $R$ is reflexive, $A\subseteq R[A]$, and $A\subseteq A^{\perp\perp}$ since biorthogonal is a closure operator. So 
$R[A^{\perp\perp}]\subseteq R[R[A]^{\perp\perp}] = R[A]^{\perp\perp}$ where the final equality uses the second statement. 
\end{proof}

\begin{defn}
For $\mc{X}=(X,\perp,R)$ an orthoframe with additional binary relation $R$, let $\exists_R$ be the unary operation on $\mc{L}(X,\perp)$ where $\exists_R A = R[A]^{\perp\perp}$. Let $\mc{L}(\mc{X}) = (\mc{L}(X,\perp),\exists_R)$. 
\end{defn}

\begin{prop}
If $\mc{X}=(X,\perp,R)$ is a monadic orthoframe, $\mc{L}(\mc{X})$ is a monadic \ts{ol}. 
\end{prop}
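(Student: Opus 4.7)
The plan is to verify that $\exists_R A = R[A]^{\perp\perp}$ satisfies axioms (Q$_1$)--(Q$_5$) on the complete \ts{ol} $\mc{L}(X,\perp)$, relying heavily on Lemma~\ref{lem:R} which packages the needed facts about $R$ interacting with the biorthogonal closure operator.

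First I would dispatch the easy axioms. For (Q$_1$), note $R[\emptyset]=\emptyset$ so $\exists_R \emptyset = \emptyset^{\perp\perp}=\emptyset$, and $\emptyset$ is the bottom of $\mc{L}(X,\perp)$. For (Q$_2$), reflexivity of $R$ gives $A\subseteq R[A]$, and since biorthogonal is a closure operator, $R[A]\subseteq R[A]^{\perp\perp} = \exists_R A$. For (Q$_4$), Lemma~\ref{lem:R} yields $R[R[A]^{\perp\perp}] = R[A]^{\perp\perp}$, whence $\exists_R \exists_R A = (R[A]^{\perp\perp})^{\perp\perp} = R[A]^{\perp\perp}=\exists_R A$. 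For (Q$_5$), $(\exists_R A)^\perp = R[A]^{\perp\perp\perp} = R[A]^\perp$, and Lemma~\ref{lem:R}(1) gives $R[R[A]^\perp] = R[A]^\perp$, so $\exists_R (\exists_R A)^\perp = R[A]^{\perp\perp\perp}=R[A]^\perp = (\exists_R A)^\perp$.

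The main work, and the step I expect to require the most care, is (Q$_3$). Here the join of $A,B\in\mc{L}(X,\perp)$ is $A\vee B = (A\cup B)^{\perp\perp}$, so I need $R[(A\cup B)^{\perp\perp}]^{\perp\perp} = (R[A]^{\perp\perp}\cup R[B]^{\perp\perp})^{\perp\perp}$. I would argue in two rounds. First, since $A\cup B\subseteq (A\cup B)^{\perp\perp}$ and $R$ is monotone on sets, and biorthogonal closure is monotone and idempotent, $R[A\cup B]^{\perp\perp}\subseteq R[(A\cup B)^{\perp\perp}]^{\perp\perp}$; for the reverse inclusion I invoke Lemma~\ref{lem:R}(3) (applied to $A\cup B$) to get $R[(A\cup B)^{\perp\perp}]\subseteq R[A\cup B]^{\perp\perp}$, and then take biorthogonals. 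This yields $R[(A\cup B)^{\perp\perp}]^{\perp\perp} = R[A\cup B]^{\perp\perp}$. Second, using $R[A\cup B] = R[A]\cup R[B]$ and the general fact that the biorthogonal of a union equals the biorthogonal of the union of the biorthogonals of its parts (which in turn follows from the two-sided chain $R[A]\cup R[B]\subseteq R[A]^{\perp\perp}\cup R[B]^{\perp\perp}\subseteq (R[A]\cup R[B])^{\perp\perp}$ combined with monotonicity and idempotence of biorthogonal closure), I conclude the right-hand side equals $R[A\cup B]^{\perp\perp}$ as well. Stringing these two equalities together gives (Q$_3$).

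The subtle point throughout is that the ``closure under $R$'' conclusions of Lemma~\ref{lem:R} are doing real work: in particular, (Q$_4$), (Q$_5$) would fail without the conclusion that $R[A]^{\perp\perp}$ is closed under $R$, and (Q$_3$) would fail without the ability to push $R$ past the biorthogonal up to biorthogonal closure. Once these are in hand, the proof is essentially a bookkeeping exercise, and no further properties of the orthoframe beyond those already established need to be introduced.
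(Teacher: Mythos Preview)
Your proposal is correct and follows essentially the same route as the paper: verify (Q$_1$)--(Q$_5$) directly, with (Q$_1$), (Q$_2$) trivial, (Q$_4$) from Lemma~\ref{lem:R}(2), (Q$_5$) from Lemma~\ref{lem:R}(1), and the real content in (Q$_3$) handled by using Lemma~\ref{lem:R}(3) to pass $R$ through the biorthogonal closure together with $R[A\cup B]=R[A]\cup R[B]$. Your two-round presentation of (Q$_3$) is slightly more explicit than the paper's inequality chain, but the argument is the same.
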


\begin{proof}
Equations {\small (Q$_1$)} and {\small (Q$_2$)} are trivial. For {\small (Q$_3$)}, note that $\exists_R$ is order preserving, so $\exists_R A \vee \exists_R B \leq \exists_R (A\vee B)$. For the other inequality, using Lemma~\ref{lem:R}.3 and that the biorthogonal is a closure operator
\[ \exists_R(A\vee B) = R[(A\cup B)^{\perp\perp}]^{\perp\perp} \subseteq R[A\cup B]^{\perp\perp\perp\perp} = R[A\cup B]^{\perp\perp}.\]
Then since $R[A\cup B]=R[A]\cup R[B]$, using basic properties of the biorthogonal 
\[R[A\cup B]^{\perp\perp} = (R[A]\cup R[B])^{\perp\perp} \subseteq (R[A]^{\perp\perp}\cup R[B]^{\perp\perp})^{\perp\perp}.\]
This last expression is $\exists_RA\vee\exists_RB$, providing {\small (Q$_3$)}. Using Lemma~\ref{lem:R}.2 and that biorthogonal is a closure operator gives 
\[ \exists_R\exists_R A = R[R[A]^{\perp\perp}]^{\perp\perp} = R[A]^{\perp\perp\perp\perp} = R[A]^{\perp\perp} = \exists_RA,\]
yielding {\small (Q$_4$)}. Finally, using Lemma~\ref{lem:R}.1 and the fact that $A^{\perp\perp\perp}=A^\perp$ 
\[\exists_R(\exists_RA)^\perp = R[R[A]^{\perp\perp\perp}]^{\perp\perp}=R[R[A]^\perp]^{\perp\perp} = R[A]^{\perp\perp\perp} = R[A]^\perp. \]
This gives the final condition {\small (Q$_5$)}.
\end{proof}

\begin{prop}\label{prop:submonadic}
Every monadic \ts{ol} can be embedded into $\mc{L}(\mc{X})$ for some monadic orthoframe $\mc{X}$ and every complete monadic \ts{ol} is isomorphic to some such $\mc{L}(\mc{X})$. 
\end{prop}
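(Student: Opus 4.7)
The plan is to associate to each monadic \ts{ol} $(L,\exists)$ a binary relation $R$ on the MacNeille orthoframe $L^* = L\setminus\{0\}$, and check that the resulting $\mc{L}(L^*,\perp,R)$ is the target. Concretely, set $x\,R\,y$ iff $y\leq \exists x$. The underlying ol-embedding $\phi:L\to\mc{L}(L^*,\perp)$ given by $\phi(a)=\{x\in L^*:x\leq a\}$ is the standard MacNeille embedding and is already known to be a complete \ts{ol}-embedding; the task is to verify that $(L^*,\perp,R)$ is a monadic orthoframe and that $\phi$ intertwines $\exists$ with $\exists_R$.

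First I would verify the three axioms of Definition~\ref{defn:monadicorthoframe}. Axiom (M$_1$) is automatic. For (M$_2$), reflexivity of $R$ is (Q$_2$), and if $y\leq\exists x$ and $z\leq\exists y$, then $z\leq\exists y\leq\exists\exists x=\exists x$ by (Q$_4$), giving transitivity. For (M$_3$), note that $R[\{x\}]=\{y\in L^*:y\leq\exists x\}$. Using the basic fact that in an \ts{ol} one has $z\leq y^\perp$ for every $y\leq a$ iff $z\leq a^\perp$, this yields $R[\{x\}]^\perp=\{z\in L^*:z\leq(\exists x)^\perp\}$. Now if $z\leq(\exists x)^\perp$ and $z\,R\,w$, then $w\leq\exists z\leq\exists(\exists x)^\perp=(\exists x)^\perp$, where the last equality is precisely (Q$_5$). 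So $w\in R[\{x\}]^\perp$, giving (M$_3$).

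Next I would show $\phi$ preserves the quantifier. Compute $R[\phi(a)]=\bigcup\{R[\{x\}]:0<x\leq a\}=\{y\in L^*:y\leq\exists x\text{ for some }0<x\leq a\}$. Monotonicity of $\exists$ gives $y\leq\exists a$ for any such $y$, and taking $x=a$ shows every $0<y\leq\exists a$ arises this way, so $R[\phi(a)]=\phi(\exists a)$. Since $\phi(\exists a)$ is a principal down-set coming from $L$, it is biorthogonally closed in $\mc{L}(L^*,\perp)$, so $\exists_R\phi(a)=R[\phi(a)]^{\perp\perp}=\phi(\exists a)$. Combining this with the fact that $\phi$ is an ol-embedding establishes the first assertion.

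For the second assertion, when $L$ is complete I would argue $\phi$ is surjective: for $A\in\mc{L}(L^*,\perp)$ set $a=\bigvee A$ in $L$; then $A^\perp=\{z:z\leq y^\perp\text{ for all }y\in A\}=\{z:z\leq a^\perp\}=\phi(a^\perp)$, so $A=A^{\perp\perp}=\phi(a^{\perp\perp})=\phi(a)$. Thus $\phi$ is an ol-isomorphism, and by the preservation of $\exists$ a monadic \ts{ol}-isomorphism. The main obstacle is the verification of (M$_3$), and this is where (Q$_5$) enters essentially; the rest of the argument is a routine repackaging of the MacNeille completion together with monotonicity and idempotence of $\exists$.
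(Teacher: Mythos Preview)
Your proposal is correct and follows essentially the same route as the paper: the same frame $(L^*,\perp,R)$ with $x\,R\,y$ iff $y\leq\exists x$, the same verification of (M$_1$)--(M$_3$) via (Q$_2$), (Q$_4$), (Q$_5$), and the same computation $R[\phi(a)]=\phi(\exists a)$ showing $\phi$ intertwines $\exists$ with $\exists_R$. You add the explicit surjectivity argument for the complete case, which the paper leaves as well-known; one trivial edge case to note is that your ``take $x=a$'' step assumes $a\neq 0$, but the case $a=0$ is immediate from (Q$_1$).
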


\begin{proof}
Given a monadic \ts{ol} $(L,\exists)$ set $\mc{X}=(L^*,\perp,R)$ where $L^*=L\setminus\{0\}$, $x\perp y$ iff $x\leq y^\perp$, and $x\,R\,y$ iff $y\leq \exists x$. Since $\exists$ is increasing and idempotent, $R$ is reflexive and transitive. Note that $R[\{x\}]=\{y:y\leq\exists x\}$, so $R[\{x\}]^\perp=\{y:y\leq(\exists x)^\perp\}$. So if $y\in R[\{x\}]^\perp$ and $y\,R\,z$, then $y\leq(\exists x)^\perp$ and $z\leq\exists y$, so $z\leq \exists(\exists x)^\perp$. Then by {\small (Q$_5$)} $z\leq(\exists x)^\perp$, so $z\in R[\{x\}]^\perp$. Thus $R[\{x\}]^\perp$ is closed under $R$. 

It is well-known that $L$ embeds into $\mc{L}(X,\perp)$ via $\alpha(a)={\downarrow a}=\{x:x\leq a\}$ and $\alpha$ is an \ts{ol}-isomorphism if $L$ is complete. It remains to show that $\alpha(\exists a)=\exists_R (\alpha a)$. But 
$\exists_R(\alpha a) = R[\,{\downarrow} a\,]^{\perp\perp} = ({\downarrow} \exists a)^{\perp\perp} = {\downarrow}\exists a = \alpha(\exists a)$ since each principle downset is biorthogonally closed. 
\end{proof}

\begin{defn}
For a set $I$, a relational structure $\mc{X}=(X,\perp,(R_i)_I,(D_{i,j})_{I\times J})$ is an $I$-dimensional weak cylindric orthoframe if for each $i,j,k\in I$
\begin{itemize}[leftmargin=.45in]
\item[{\em \small (W$_1$)}] $(X,\perp,R_i)$ is a monadic orthoframe,
\item[{\em \small (W$_2$)}] $R_i$ commutes with $R_j$,
\item[{\em \small (W$_3$)}] $D_{i,j}=D_{j,i} = D_{i,j}^{\perp\perp}$ and $D_{i,i}=X$,
\item[{\em \small (W$_4$)}] if $j\neq i,k$ then $R_j[D_{i,j}\cap D_{j,k}]=D_{i,k}$. 
\end{itemize}\end{defn}

The definition of $\mc{L}(\mc{X})$ for a weak cylindric orthoframe $\mc{X}$ is the obvious one. 

\begin{prop}
If $\mc{X}=(X,\perp,(R_i)_I,(D_{i,j})_{I\times J})$ is a weak cylindric orthoframe, then $\mc{L}(\mc{X})$ is a weak cylindric \ts{ol} and each weak cylindric \ts{ol} is a subalgebra of some $\mc{L}(\mc{X})$. 
\end{prop}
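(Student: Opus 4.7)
The plan is to prove both halves in turn, leaning heavily on the monadic orthoframe machinery already established. For the first half, given the weak cylindric orthoframe $\mc{X}$, set $c_i = \exists_{R_i}$ and interpret $d_{i,j}$ as $D_{i,j}$. Axiom (C$_1$) is immediate from the preceding proposition for monadic orthoframes, since (W$_1$) says each $(X,\perp,R_i)$ is monadic. Axiom (C$_3$) is a direct translation of (W$_3$) once one notes $D_{i,i} = X$ is the top of $\mc{L}(X,\perp)$. Axiom (C$_4$) is similarly direct: for $j\neq i,k$,
\[c_j(D_{i,j}\cap D_{j,k}) = R_j[D_{i,j}\cap D_{j,k}]^{\perp\perp} = D_{i,k}^{\perp\perp} = D_{i,k},\]
by (W$_4$) and the biorthogonal closure clause of (W$_3$).

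The one nontrivial piece is (C$_2$). Here I would prove a small auxiliary identity: for any $A\subseteq X$,
\[ \exists_{R_i}\exists_{R_j}A \;=\; (R_i\circ R_j)[A]^{\perp\perp}. \]
The inclusion $\supseteq$ comes from $R_j[A]\subseteq R_j[A]^{\perp\perp}$ applied inside $R_i[\cdot]^{\perp\perp}$. The inclusion $\subseteq$ uses Lemma~\ref{lem:R}.3 with $R=R_i$ and the set $R_j[A]$, giving $R_i[R_j[A]^{\perp\perp}]\subseteq R_i[R_j[A]]^{\perp\perp}$; taking biorthogonals of both sides and using $(\cdot)^{\perp\perp\perp\perp}=(\cdot)^{\perp\perp}$ finishes it. With the identity in hand, (W$_2$) says $R_i\circ R_j = R_j\circ R_i$ as relations, so $\exists_{R_i}\exists_{R_j}A = \exists_{R_j}\exists_{R_i}A$. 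This is the step I expect to require the most care, mainly because biorthogonal closure interacts subtly with relational image, but Lemma~\ref{lem:R} is tailored for exactly this.

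For the second half, given a weak cylindric \ts{ol} $(L,c_i,d_{i,j})$, I would mimic the construction in Proposition~\ref{prop:submonadic} and set $X=L^*$, $x\perp y$ iff $x\leq y^\perp$, $x R_i y$ iff $y\leq c_i x$, and $D_{i,j} = {\downarrow}d_{i,j}$. Then (W$_1$) holds for each $(X,\perp,R_i)$ by Proposition~\ref{prop:submonadic}. For (W$_2$), note that $x(R_i\circ R_j)y$ iff there is a witness $z$ with $z\leq c_j x$ and $y\leq c_i z$; the largest such $z$ is $c_j x$, so this is equivalent to $y\leq c_i c_j x$. Using (C$_2$) in $L$, $c_ic_j=c_jc_i$, so $R_i\circ R_j = R_j\circ R_i$. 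Axiom (W$_3$) is immediate since principal downsets are biorthogonally closed, $d_{i,i}=1$, and $d_{i,j}=d_{j,i}$. For (W$_4$), the routine computation $R_j[{\downarrow}a]={\downarrow}c_j a$ together with (C$_4$) yields $R_j[D_{i,j}\cap D_{j,k}] = R_j[{\downarrow}(d_{i,j}\wedge d_{j,k})] = {\downarrow}c_j(d_{i,j}\wedge d_{j,k}) = {\downarrow}d_{i,k} = D_{i,k}$.

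Finally, the map $\alpha(a)={\downarrow}a$ is already known from Proposition~\ref{prop:submonadic} to be an \ts{ol}-embedding of $L$ into $\mc{L}(X,\perp)$ with $\alpha(c_i a) = \exists_{R_i}\alpha(a)$, and it sends $d_{i,j}$ to $D_{i,j}$ by construction, so it is a weak cylindric \ts{ol}-embedding into $\mc{L}(\mc{X})$.
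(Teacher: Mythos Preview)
Your proposal is correct and follows essentially the same route as the paper: both directions use the monadic orthoframe construction from Proposition~\ref{prop:submonadic}, the key nontrivial step (C$_2$) is handled via Lemma~\ref{lem:R}.3 and (W$_2$) just as the paper does (you package it as the identity $\exists_{R_i}\exists_{R_j}A = R_i[R_j[A]]^{\perp\perp}$ whereas the paper writes the same inclusions as a single chain), and your verification of (W$_2$) in the converse direction is the same element chase as the paper's. The remaining axioms you spell out are exactly those the paper dismisses as ``fairly obvious,'' so there is no substantive divergence.
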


\begin{proof}
To show $\mc{L}(\mc{X})$ is a weak cylindric \ts{ol} we show that the quantifiers $\exists_{R_i}$ commute. The other properties are fairly obvious. Using Lemma~\ref{lem:R}.3 and that $R_i,R_j$ commute, 
\[ R_i[R_j[A]^{\perp\perp}]^{\perp\perp}\subseteq R_i[R_j[A]]^{\perp\perp}= R_j[R_i[A]]^{\perp\perp}\subseteq R_j[R_i[A]^{\perp\perp}]^{\perp\perp}.\]
So $\exists_{R_i}\exists_{R_j}A\subseteq\exists_{R_j}\exists_{R_i}A$, and by symmetry, we have equality. 
Conversely, given a weak cylindric \ts{ol} $(L,(\exists_i)_I,(d_{i,j})_{I\times J})$, let $\mc{X}=(L^*,\perp,(R_i)_I,(D_{i,j})_{I\times J})$ where $L^*$, $\perp$, and each $R_i$ is as in the proof of Proposition~\ref{prop:submonadic} and $D_{i,j}={\downarrow }\,d_{i,j}$. We show $R_i,R_j$ commute, the other properties are obvious. If $x\, R_i\, y$ and $y\, R_j z$, then $y\leq\exists_i\, x$ and $z\leq \exists_j\, y$. Then $z\leq \exists_j\exists_i\, x$. Since the quantifiers commute, $z\leq \exists_i\exists_j\, x$, so for $w=\exists_j\, x$ we have $x\, R_j\, w$ and $w\, R_i\, z$. Thus $R_i\circ R_j\subseteq R_j\circ R_i$, and by symmetry they are equal. 
\end{proof}
\vspace{-2ex}

\section{Conclusion}

We introduced monadic and cylindric \ts{ol}'s and developed their basic theory. The connection between these structures and von Neumann algebras was illustrated through a series of examples. A version of quantum cylindric set algebras based in tensor powers of a Hilbert space was developed and related to algebraic quantum predicate calculus. Issues with substitution in this context were observed. A form of Kripke frames for these structures was developed. 

Items related to the discussion deserve additional attention. A path was suggested to determine if each monadic \ts{ol} is isomorphic to a functional one. The relationship between commuting squares of subfactors and quantum cylindric algebras should be settled. Axiomatic issues were found in quantum cylindric algebras related to substitution operations and quantum predicate calculus. Kripke frames developed for monadic \ts{ol}s could be developed from the perspective of canonical extensions rather than MacNeille completions as was done here. 

The broad aim is to use the structures introduced here as vehicles to view modern developments in operator algebras from the perspective of order theory, geometry, and logic. 


\begin{center} {\large \ts{References} } \end{center}
\vspace{1ex}

\bibliographystyle{plain}
\bibliography{Q-monadic}{}

\end{document}